\definecolor{shadecolor}{gray}{0.875}
\newcommand{\Z}{{\mathbb Z}}
\newcommand{\RR}{{\mathbb R}}
\def\Ass{\operatorname{Ass}}
\def\depth{\operatorname{depth}}
\def\NP{\operatorname{NP}}
\def\np{\operatorname{np}}
\newcommand{\factor}[2]{\left. \raise 1pt\hbox{\ensuremath{#1}} \right/
        \hskip -2pt\raise -3pt\hbox{\ensuremath{#2}}}
\theoremstyle{plain} 
\newtheorem{thm}{Theorem}[section]
\newtheorem{introthm}{Theorem}
\newtheorem*{introthm*}{Theorem}
\newtheorem{question}{Question}
\newtheorem{cor}[thm]{Corollary}
\newtheorem{lem}[thm]{Lemma}
\newtheorem{prop}[thm]{Proposition}
\theoremstyle{definition}
\newtheorem{defn}[thm]{Definition}
\newtheorem{ex}[thm]{Example}
\newtheorem{rem}[thm]{Remark}
\numberwithin{equation}{section}  
\title{Newton Polytopes and Analytic Spread}
\author{Benjamin Drabkin}
\author {Benjamin Oltsik}
\address{Singapore University of Technology and Design,
8 Somapah Rd, Building 1, Singapore 487372}
\email{benjamin\_drabkin@sutd.edu.sg}
\address{Department of Mathematics, University of Connecticut, Storrs, CT 06269}
\email{benjamin.oltsik@uconn.edu}
\subjclass[2020]{Primary: 13A02, 13E05, 13H15}
\keywords{Basic ideals, ideal reduction, integral closure.}
\begin{document}
\begin{abstract} 
Using the Newton polytope and polyhedron, we study analytic spread and ideal reductions of monomial ideals.  We determine a bound for analytic spread based on halfspaces and hyperplanes of the Newton polytope, and we classify basic monomial ideals.   We then apply this method to calculate the analytic spread for a few families of monomial ideals.
\end{abstract}

\maketitle

\section{Introduction}
Let $K$ be an infinite field, and let $I\subseteq R=K[x_1,\dots,x_n]$.  The analytic spread, $\ell(I)=\dim R[It]\otimes_R K$, is an invariant which captures information about the generators of the powers of $I$. 
 Notably, $\ell(I)$ gives the minimal number of generators in a reduction of $I$.    

In \cite{BA}, Bivi\`{a}-Ausina proves that the analytic spread of a monomial ideal can be determined from its Newton polyhedron, $\NP(I)$.  In particular, the analytic spread is one higher than the maximum dimension of a compact face of $\NP(I)$.  This is a very powerful tool for computing analytic spread, as it allows one to use convex geometric methods to solve what would otherwise be algebraic questions.  When given a sketch of NP(I), one can immediately identify the analytic spread.  However, when NP(I) is implicitly given by halfspaces and hyperplanes, it is often complicated to determine which faces of the Newton polyhedron are compact.

In this paper, we show that the analytic spread of any monomial ideal can be characterized in terms of its Newton polytope, $\np(I)$, and consider applications of this characterization.  In Section 2, we give an overview of necessary background information.  In Section 3, we introduce and prove our characterization of the analytic spread of monomial ideals.
\begin{introthm}[Theorem \ref{thm:spreadCalc}]\label{introThm1}
    Let $I$ be a monomial ideal, and let $\np(I)$ have hyperplanes defined by $$\begin{array}{c}\mathbf{w}_1\cdot\mathbf{u}=b_1\\
    \mathbf{w}_2\cdot\mathbf{u}=b_2\\
    \vdots\\
    \mathbf{w}_s\cdot\mathbf{u}=b_s\end{array}$$
    and halfspaces defined by
    $$
    \begin{array}{c}
    \mathbf{h}_1\cdot\mathbf{u}\leq c_1\\
    \mathbf{h}_2\cdot\mathbf{u}\leq c_2\\
    \vdots\\
    \mathbf{h}_t\cdot\mathbf{u}\leq c_t
    \end{array}.
    $$

    Suppose that there exist $\alpha_1,\dots,\alpha_s\in\mathbb{R}$ and $\beta_1,\dots,\beta_t\in\mathbb{R}_{\geq 0}$ such that $$ \mathbf{W}:=\sum_{i=1}^s\alpha_i\mathbf{w}_i+\sum_{i=1}^t\beta_j\mathbf{h}_i$$ has all negative entries.  Then $\ell(I)\leq n+1-(s+k)$ where $k$ is the minimal number of non-zero $\beta_j$ required to achieve $\mathbf{W}$ with all negative entries.
\end{introthm}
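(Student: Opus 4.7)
The plan is to invoke Bivi\`{a}-Ausina's theorem, which identifies $\ell(I) - 1$ with the maximal dimension of a compact face of the Newton polyhedron $\NP(I)$. Since $\NP(I) = \np(I) + \mathbb{R}_{\geq 0}^n$, a face of $\NP(I)$ is compact precisely when it coincides with a face $F$ of $\np(I)$ whose outer normal cone meets the open negative orthant: any non-negative coordinate of the defining functional would force the maximum of $\mathbf{v} \cdot x$ over $\NP(I)$ to escape along the corresponding coordinate axis. Hence it suffices to show $\dim F \leq n - s - k$ for every face $F$ of $\np(I)$ whose normal cone contains a vector with all strictly negative entries.

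Fix such a face $F$ of dimension $r$, let $J \subseteq \{1,\dots,t\}$ index the halfspaces active on $F$, and let $\mathbf{v} \in N_F$ have all negative coordinates. The standard description of the normal cone of a face of a polyhedron gives
\[
N_F = \operatorname{span}_{\mathbb{R}}(\mathbf{w}_1,\dots,\mathbf{w}_s) + \operatorname{cone}(\mathbf{h}_j : j \in J).
\]
Passing to the quotient $\mathbb{R}^n / \operatorname{span}(\mathbf{w}_1,\dots,\mathbf{w}_s)$, the classes $\bar{\mathbf{h}}_j$ for $j \in J$ span a subspace of dimension $\codim_{\np(I)}(F) = n - s - r$.

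The key step is Carath\'eodory's theorem for cones, applied in this quotient: the image $\bar{\mathbf{v}}$ lies in the conic hull of $\{\bar{\mathbf{h}}_j\}_{j \in J}$, which spans an $(n-s-r)$-dimensional subspace, so $\bar{\mathbf{v}}$ is a non-negative combination of at most $n - s - r$ of the $\bar{\mathbf{h}}_j$. Lifting this relation back to $\mathbb{R}^n$ produces coefficients $\alpha_1,\dots,\alpha_s \in \mathbb{R}$ and $\beta_1,\dots,\beta_t \in \mathbb{R}_{\geq 0}$ satisfying $\mathbf{v} = \sum_i \alpha_i \mathbf{w}_i + \sum_j \beta_j \mathbf{h}_j$ with at most $n - s - r$ non-zero $\beta_j$. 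Since $\mathbf{v}$ is all-negative, the minimality defining $k$ gives $k \leq n - s - r$, equivalently $\dim F \leq n - s - k$. Taking the maximum over all compact faces of $\NP(I)$ and invoking Bivi\`{a}-Ausina yields $\ell(I) \leq 1 + (n - s - k) = n + 1 - (s + k)$.

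The subtle point is relating the globally defined quantity $k$ to the local geometry at each individual face: only the \emph{active} halfspace normals at $F$ can participate in expressing $\mathbf{v}$, and reducing modulo the lineality subspace spanned by the $\mathbf{w}_i$'s is precisely what sharpens the naive Carath\'eodory bound from $n - r$ to $n - s - r$, producing the $-s$ summand in the final inequality. Once this identification is in hand, the rest is routine polyhedral bookkeeping.
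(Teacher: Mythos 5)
Your proof is correct and follows the same overall strategy as the paper's: identify each compact face of $\NP(I)$ with a face $F$ of $\np(I)$ whose normal cone contains an all-negative vector, and then bound $\dim F$ by counting how many halfspace normals are needed to express such a vector. The one place where you genuinely diverge is the counting step, and your version is the more careful one. The paper takes the minimum-dimensional cone $C$ of the normal fan meeting the negative orthant, writes it with $m$ halfspace generators plus the lineality space, and asserts that the corresponding face has codimension exactly $s+m$; since the $m$ generators need not be independent modulo $\operatorname{span}(\mathbf{w}_1,\dots,\mathbf{w}_s)$, one only gets $\codim F = \dim C \le s+m$ a priori, which bounds $\dim F$ from the wrong side (the paper's own Remark~\ref{rem:nonEqual} is essentially about this phenomenon). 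Your application of Carath\'eodory's theorem in the quotient $\mathbb{R}^n/\operatorname{span}(\mathbf{w}_1,\dots,\mathbf{w}_s)$ supplies exactly the missing inequality: any all-negative vector in $N_F$ admits a representation using at most $\dim N_F - s = n - s - \dim F$ of the $\mathbf{h}_j$, so the minimality defining $k$ gives $k \le n - s - \dim F$, i.e.\ $\dim F \le n - s - k$, which is what the theorem needs. So your argument is not just valid but tightens the printed proof at the one step where care is required; the remaining ingredients (the compactness criterion via strictly decreasing functionals, the standard description of $N_F$, and Bivi\`a-Ausina's theorem) match the paper's Lemmas~\ref{lem:cpctFaceSum} and~\ref{lem:strictDec} and Theorem~\ref{thm:cpctFace}.
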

In many cases, this provides an easier method of computing analytic spread than determining the compact faces of the Newton polyhedron.

In Section 4, we use this result to give a characterization of basic monomial ideals.
\begin{introthm}[Theorem \ref{thm:expectedCodim}]
Let $I$ be a monomial ideal in $K[x_1, \ldots, x_n]$, and assume $\np(I)$ lies on the $s$ hyperplanes, $\{\mathbf{w}_i \cdot \mathbf{u} = b_i : i = 1, \ldots, s\}$, where $\mathbf{w}_i = (a_{i, 1}, \ldots , a_{i, n})$, presented so that the $s \times n$ matrix of $\mathbf w_i$'s is in reduced row echelon form.  Then $I$ is basic if and only if all of the following conditions hold:
\begin{enumerate}
    \item $\mu(I) \le n$,
    \item $s = n - \mu(I) + 1$,
    \item for every $j \in \{1, \ldots, n\}$, there exists $i_j$ such that $a_{i_j, j} > 0$.
\end{enumerate}
\end{introthm}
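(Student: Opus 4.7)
The plan is to apply Theorem~\ref{introThm1} in both directions, matching its upper bound against the convex-hull inequality $\dim\np(I)\le\mu(I)-1$ and, for the reverse direction, against the lower bound for $\ell(I)$ coming from the Bivi\`{a}-Ausina characterization cited in the introduction.

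For the forward direction, assume $I$ is basic, so $\ell(I)=\mu(I)$. Condition (1) is immediate from $\ell(I)\le n$. For (2), the convex-hull inequality gives $s\ge n-\mu(I)+1$, while Theorem~\ref{introThm1} gives $\mu(I)=\ell(I)\le n+1-s-k$ for the minimal valid $k\ge 0$, which forces $s+k\le n-\mu(I)+1$, hence $s=n-\mu(I)+1$ and $k=0$. The vanishing $k=0$ produces $\alpha_i\in\RR$ with $\mathbf{W}=\sum_i\alpha_i\mathbf{w}_i$ strictly negative in every coordinate. Reading this column by column in reduced row echelon form, each pivot column $j$ (with pivot in row $r$) gives $\mathbf{W}_j=\alpha_r$, forcing $\alpha_r<0$; each non-pivot column $j$ must then contain at least one $a_{i_j,j}>0$, because otherwise $\sum_i\alpha_i a_{i,j}$ would be a sum of products of negative $\alpha_i$'s with non-positive entries, hence non-negative, contradicting strict negativity. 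This establishes (3).

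For the backward direction, assume (1), (2), (3). The plan is to use (3) to construct $\alpha_i\in\RR$ such that $\sum_i\alpha_i\mathbf{w}_i$ is strictly negative in every coordinate, making $k=0$ attainable in Theorem~\ref{introThm1}. Set $\alpha_r<0$ for each pivot row $r$ (which handles the pivot columns immediately by the RREF shape), and use the positive entry $a_{i_j,j}$ from (3) in each non-pivot column $j$: the term $\alpha_{i_j}a_{i_j,j}$ is negative, and after a careful choice of magnitudes $|\alpha_i|$ this dominates any positive contributions from other rows in the same column. Theorem~\ref{introThm1} then yields $\ell(I)\le n+1-s=\mu(I)$. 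A matching lower bound $\ell(I)\ge\mu(I)$ follows by identifying $\np(I)$ as a compact face of $\NP(I)$ of dimension $\mu(I)-1$, which is enabled exactly by the existence of the strictly positive vector $-\sum_i\alpha_i\mathbf{w}_i$ in the row span of the $\mathbf{w}_i$'s, and applying the Bivi\`{a}-Ausina formula cited in the introduction. Combining, $\ell(I)=\mu(I)$, so $I$ is basic.

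The main technical obstacle lies in the scaling step of the backward direction: condition (3) only guarantees one positive entry per non-pivot column, so one must juggle the magnitudes of the $\alpha_i$ simultaneously across all non-pivot columns to make every column sum strictly negative. I expect this to require an iterative or linear-programming-style selection that exploits the triangular shape of RREF, and this is the part of the proof where the characterization really earns its keep.
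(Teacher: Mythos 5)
Your forward direction is correct and is essentially the paper's own argument: (1) from $\ell(I)\le n$, (2) from the dimension count $\dim\np(I)\le\mu(I)-1$ (Lemma \ref{lem:numHP}) played against Theorem \ref{thm:spreadCalc}, and (3) from reading the all\nobreakdash-negative combination $\mathbf W=\sum_i\alpha_i\mathbf w_i$ column by column in RREF (pivot columns force every $\alpha_i<0$, so a column with no positive entry would make that coordinate of $\mathbf W$ nonnegative). Granting the existence of the all-negative $\mathbf W$, your closing step in the converse (Lemma \ref{lem:strictDec} exhibits $\np(I)$ itself as a compact face of $\NP(I)$ of dimension $n-s=\mu(I)-1$, so Bivi\`a-Ausina gives $\ell(I)\ge\mu(I)$) is also sound.

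The gap sits exactly where you flag the ``main technical obstacle,'' and it cannot be repaired by any choice of magnitudes: condition (3) does \emph{not} imply that the row span of the $\mathbf w_i$ contains an all-negative vector once $s\ge 2$. Take $s=2$, $n=4$,
\[
\mathbf w_1=(1,0,1,-1),\qquad \mathbf w_2=(0,1,-1,1).
\]
This matrix is in RREF and every column has a positive entry, yet $\alpha_1\mathbf w_1+\alpha_2\mathbf w_2=(\alpha_1,\alpha_2,\alpha_1-\alpha_2,\alpha_2-\alpha_1)$ has last two coordinates that are negatives of each other, so no all-negative combination exists. This is realized by a genuine monomial ideal: $I=(x^3y^3,\;xy^5z^2,\;x^5yw^2)\subseteq K[x,y,z,w]$ has $\mu(I)=3\le4$ and $\np(I)$ a triangle lying on exactly the two hyperplanes $\mathbf w_1\cdot\mathbf u=3$ and $\mathbf w_2\cdot\mathbf u=3$, so (1)--(3) all hold; but since no functional constant on $\np(I)$ is strictly decreasing, Lemma \ref{lem:strictDec} shows $\np(I)$ is not a face of $\NP(I)$, hence $\ell(I)\le2<\mu(I)$ and $I$ is not basic. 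So the converse implication you are trying to prove fails as stated; the correct sufficient condition is the strictly stronger requirement that the row span of the $\mathbf w_i$ contain an all-negative vector (for $s=1$ this coincides with (3), which is why Corollary \ref{cor:eqDim} is fine). You should be aware that the paper's own proof has the same hole: its ``converse'' paragraph only proves $\lnot(3)\Rightarrow$ not basic, which is the forward direction again. Your proposal at least isolates the missing step honestly, but the ``iterative or linear-programming-style selection'' you hope for does not exist.
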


In Section 5, we demonstrate applications of Theorem \ref{introThm1}.   We show that the inequality of \ref{introThm1} is an equality for all Newton polyhedra in three dimensions, and we compute the analytic spread of two families of monomial ideals. In particular, for monomial ideals with with $r$ disjointly generated associated primes, we show that $\ell(I)=n-r+1$.  For monomial ideals that are intersections of two prime powers, we show that $\ell(I)=n-1$.

\section{Background}
In this paper, all rings are commutative and unital.


\begin{defn}\label{def:red} Let $R$ be a ring, and let $J \subseteq I$ be ideals.  We call $J$ a \textit{reduction} of $I$ if, for some $n \in \Z_{>0}$, $JI^n = I^{n+1}$.
\end{defn}

\begin{ex} Let $R = K[x, y]$, $J = (x^m, y^m)$, and $I = (x, y)^m$ for any $m$.  It is clear that $J \subseteq I$ is a reduction since $JI = I^2$.
    
\end{ex}

A reduction of an ideal shares many traits with the original.  For example, an ideal and its reduction have the same radical, minimal primes, and height \cite{HS}.  Importantly, an ideal $J \subseteq I$ is a reduction if and only if $\overline{J} = \overline{I}$.

\begin{defn}\label{def:minred} Let $J$ be a reduction of $I$, then $J$ is a \textit{minimal reduction} of $I$ if it contains no proper reductions of $I$.  If $I$ has no reductions other than itself, then $I$ is called \textit{basic}.
    
\end{defn}


A main goal of this paper is to classify precisely when certain families of monomial ideals are basic.  To do this, we need the following:

\begin{defn}\label{def:reesAnSp} Let $I \subseteq R$ be an ideal.  The \textit{Rees algebra of $I$} is
\[
    \mathcal R(I) = \bigoplus_{n=1}^\infty I^n t^n.
\]
If $R$ is local (or graded) with maximal ideal (or unique maximal homogeneous ideal) $\mathfrak m$, the analytic spread is
\[
    \ell(I) = \dim\left(\mathcal R(I) \otimes_R \factor{R}{\mathfrak m} \right) = \dim\left(\bigoplus_{n=1}^\infty \frac{I^n}{\mathfrak m I^n} t^n\right),
\]
where $\dim(M)$ is the Krull dimension of $M$.
    
\end{defn}

An important fact that we will use later in the paper is that $\ell(I) \le \dim R$.

While not immediately apparent, there is a critical relation between analytic spread and reductions of ideals:

\begin{thm}[Corollary 8.3.6, Proposition 8.3.7 \cite{HS}] \label{thm:minred} Let $R$ be a local (or graded) ring, and let $I$ be a (homogeneous) ideal.  Then, if the residue field of $R$ is infinite and $J \subseteq I$ is a reduction, then $J$ is a minimal reduction of $I$ if and only if $\mu(J) = \ell(I)$.
\end{thm}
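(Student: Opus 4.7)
The plan is to transfer the entire problem to the fiber cone $F(I) = \mathcal{R}(I) \otimes_R R/\mathfrak{m} = \bigoplus_n I^n/\mathfrak{m}I^n$, whose Krull dimension is $\ell(I)$ by definition, and to exploit the fact that $F(I)$ is a finitely generated graded algebra over the infinite field $k = R/\mathfrak{m}$. The inclusion $J \subseteq I$ induces a graded $k$-algebra homomorphism $F(J) \to F(I)$, and the reduction hypothesis $JI^n = I^{n+1}$ says exactly that $F(I)$ is generated as an $F(J)$-module by the finitely many elements in degrees $0, 1, \dots, n$. Hence $F(J) \to F(I)$ is a finite (in particular integral) extension, so $\dim F(J) = \dim F(I) = \ell(I)$.

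For the inequality $\mu(J) \ge \ell(I)$, I would observe that if $J = (f_1,\dots,f_m)$ with $m = \mu(J)$, then $F(J)$ is generated in degree one by the images $\bar f_1,\dots,\bar f_m$, hence $F(J)$ is a surjective image of the polynomial ring $k[T_1,\dots,T_m]$. This gives $\dim F(J) \le m = \mu(J)$, and combined with the previous paragraph yields $\mu(J) \ge \ell(I)$. In particular, no reduction of $I$ can have fewer than $\ell(I)$ generators, so any reduction achieving $\mu(J) = \ell(I)$ is automatically minimal.

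The substantive step is the converse: producing, for any minimal reduction $J$, a reduction inside $J$ with exactly $\ell(I)$ generators (which by minimality must equal $J$). Here I would invoke graded Noether normalization over the infinite field $k$: because $k$ is infinite, a sufficiently generic $k$-linear combination of $\bar f_1, \dots, \bar f_m$ in the degree-one part of $F(J)$ yields $\ell = \ell(I)$ elements $\bar g_1, \dots, \bar g_\ell$ that form a homogeneous system of parameters for $F(J)$, making $F(J)$ integral over $k[\bar g_1,\dots,\bar g_\ell]$. Lifting to elements $g_1,\dots,g_\ell \in J$ and setting $J' = (g_1,\dots,g_\ell)$, the integrality translates to the statement that every degree of $F(I)$ is spanned by products of the $\bar g_i$ up to bounded degree, which is equivalent to $J' I^N = I^{N+1}$ for some $N$, i.e.\ $J'$ is a reduction of $I$ contained in $J$. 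By minimality of $J$ we conclude $J = J'$ and hence $\mu(J) \le \ell$; combined with the first inequality this gives $\mu(J) = \ell(I)$.

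The main obstacle is the generic-linear-combination argument in the third paragraph: one must verify both that such a system of parameters exists (this is where infiniteness of $k$ and standard graded Noether normalization enter) and that the integrality of $F(J)$ over $k[\bar g_1,\dots,\bar g_\ell]$ rigorously lifts back to the equality $J'I^N = I^{N+1}$ in $R$. A clean way to carry this lifting out is to use that finiteness of $F(J)$ over the subring generated by the $\bar g_i$'s means $F(J)_d$ is contained in the span of products of the $\bar g_i$ for all $d$ large, which in turn says $J^d \subseteq (g_1,\dots,g_\ell) J^{d-1} + \mathfrak{m} J^d$, and then Nakayama (applied degree-by-degree, using that $J$ and all $J^d$ are finitely generated) removes the $\mathfrak{m} J^d$ term and yields the reduction relation.
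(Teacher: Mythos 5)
First, a point of reference: the paper does not prove this statement at all---it is imported verbatim from Huneke--Swanson [HS, Cor.~8.3.6, Prop.~8.3.7]---so the only meaningful comparison is with the standard textbook argument, which is indeed the fiber-cone/graded-Noether-normalization proof you outline. Your direction ``$J$ minimal $\Rightarrow\mu(J)=\ell(I)$'' is essentially the right argument. Two caveats, though: (i) the map $F(J)\to F(I)$ need not be injective, so calling it a ``finite extension'' and deducing $\dim F(J)=\dim F(I)$ is not automatic; module-finiteness only gives $\ell(I)=\dim F(I)=\dim(\operatorname{im}F(J))\le\dim F(J)$. The clean fix is to run graded Noether normalization on the \emph{image} of $F(J)$ inside $F(I)$, which is a standard graded subalgebra of dimension exactly $\ell(I)$ over which $F(I)$ is module-finite; then you never need $\ell(J)=\ell(I)$ as an input. (ii) You should also say why $(g_1,\dots,g_\ell)$ is a reduction of $I$ rather than merely of $J$ (transitivity of reductions, or finiteness of $F(I)$ over $k[\bar g_1,\dots,\bar g_\ell]$).

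The genuine gap is the converse, ``$\mu(J)=\ell(I)\Rightarrow J$ minimal,'' which you dismiss with ``no reduction of $I$ can have fewer than $\ell(I)$ generators, so any reduction achieving $\mu(J)=\ell(I)$ is automatically minimal.'' That is a non sequitur: minimality is a statement about containment, not about generator counts. A proper reduction $K\subsetneq J$ would also satisfy $\mu(K)\ge\ell(I)$, and nothing in your first two paragraphs rules it out. The missing ingredient is the analytic-independence argument: when $\mu(J)=\ell(I)$, the squeeze $\ell(I)\le\dim F(J)\le\mu(J)$ forces $F(J)$ to be a quotient of $k[T_1,\dots,T_\ell]$ of dimension $\ell$, hence \emph{equal} to the polynomial ring. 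If $K\subsetneq J$ were a reduction of $I$, it would be a reduction of $J$ (they have the same integral closure), so $KJ^n=J^{n+1}$ for some $n$; in $F(J)$ this reads $V\cdot F(J)_n=F(J)_{n+1}$, where $V$ is the image of $K$ in $J/\mathfrak m J$, a proper subspace by Nakayama---impossible in a polynomial ring, since after a linear change of coordinates $V$ misses some variable $T_\ell$ and then $T_\ell^{\,n+1}\notin V\cdot F(J)_n$. This step is precisely the content of [HS, Prop.~8.3.7]; without it your argument proves only one direction of the equivalence (and, in the paper's application to Corollary \ref{cor:basicCriterion}, it is exactly the direction needed to conclude that $\mu(I)=\ell(I)$ implies $I$ is basic).
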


As an immediate consequence,

\begin{cor}\label{cor:basicCriterion} Let $R$ be a local (or graded) ring with infinite residue field, and let $I$ be a (homogeneous) ideal.  Then $I$ is basic if and only if $\mu(I) = \ell(I)$.

\end{cor}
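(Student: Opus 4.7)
The plan is to unwind the definitions and apply Theorem \ref{thm:minred} with $J = I$. The key observation is that $I$ is always trivially a reduction of itself (take $n = 1$ in Definition \ref{def:red}: $I \cdot I^1 = I^2$), so the question of whether $I$ is basic reduces to whether $I$ itself is the minimal reduction among its reductions.

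For the forward direction, suppose $I$ is basic. Then by Definition \ref{def:minred}, $I$ contains no proper reductions of $I$. Since $I$ is itself a reduction of $I$, this says precisely that $I$ is a minimal reduction of $I$. Applying Theorem \ref{thm:minred} with $J = I$ then yields $\mu(I) = \ell(I)$.

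Conversely, suppose $\mu(I) = \ell(I)$. Since $I$ is a reduction of itself, Theorem \ref{thm:minred} (again with $J = I$) tells us that $I$ is a minimal reduction of $I$. By Definition \ref{def:minred}, this means $I$ contains no proper reductions of $I$, so $I$ is basic.

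There is essentially no obstacle here; the content of the corollary is entirely contained in the previous theorem, and the only subtlety is noting that a basic ideal, which by definition is simply an ideal with no proper reductions, can be reinterpreted as an ideal which is a minimal reduction of itself. Once this linguistic observation is made, both directions follow by a single application of Theorem \ref{thm:minred}.
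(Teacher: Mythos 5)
Your proof is correct and is exactly the argument the paper intends: the paper presents the corollary as an ``immediate consequence'' of Theorem \ref{thm:minred}, obtained by taking $J = I$ and noting that ``basic'' means precisely that $I$ is a minimal reduction of itself. Nothing further is needed.
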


We now introduce a few tools to help work with monomial ideals.

\begin{defn}\label{def:newtonStuff} Let $R = K[x_1, \ldots, x_n]$, and let $I$ be a monomial ideal.  The \textit{Newton polytope} of $I$ is
\[
    \np(I) := \operatorname{conv}\{\mathbf b \in \Z_{\ge 0}^n : {\mathbf x}^\mathbf b \in G(I)\},
\]
where $G(I)$ is the minimal generating set of monomials for $I$, $\mathbf b = (b_1, \ldots, b_n)$, and $\mathbf x^{\mathbf b} = x_1^{b_1}\cdots x_n^{b_n}$.  The vector $\mathbf b$ is called the \textit{exponent vector}.

Define the \textit{Newton polyhedron} of $I$ as
\[
    \NP(I) := \operatorname{conv}\{\mathbf b \in \Z_{\ge 0}^n : {\mathbf x}^\mathbf b \in I\}.
\]

Equivalently,
\[
    \NP(I) = \np(I) + \RR_{\ge 0}^n,
\]
where the above addition is the Minkowski sum.


\end{defn}
An important fact about $\NP(I)$ is that a monomial $\mathbf{x}^\mathbf{b} \in \overline I$ if and only if $\mathbf b \in \NP(I)$.  Thus, a monomial reduction of an ideal will have the same Newton polyhedron, but not necessarily the same Newton polytope.

As with any convex polyhedron, the Newton polytope and Newton polyhedron can be expressed as an intersection of a set of hyperplanes and halfspaces.  In particular, a minimal set of the hyperplanes on which the polytope lies is unique up to linear combinations.  Also, halfspaces can be chosen so that the intersection between the polytope and the supporting hyperplane of the halfspace is a facet of the polytope.  We call such a set of halfspaces, \textit{facet-defining}.

A remarkable result from Bivià-Ausina \cite{BA} relates analytic spread with the Newton polyhedron: 

\begin{thm}[Bivià-Ausina \cite{BA}]\label{thm:cpctFace} Let $I$ be a monomial ideal.  Then $\ell(I)$ is equal to one more than the largest dimension of a compact face of $\NP(I)$.
\end{thm}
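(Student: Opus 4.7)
The plan is to interpret $\ell(I)$ as the Krull dimension of the special fiber ring of the Rees algebra, with a convenient passage to the integral closure.  For a monomial ideal $I$, the integral closure $\overline{\R(I)} = \bigoplus_n \overline{I^n}\, t^n$ is a module-finite extension of $\R(I)$ (since the saturation of a finitely generated affine semigroup is again finitely generated, by Gordan's lemma), so
\[
\dim \overline{\R(I)}/\fm\overline{\R(I)} \;=\; \dim \R(I)/\fm\R(I) \;=\; \ell(I).
\]
The $n$-th graded piece of the left-hand side is $\overline{I^n}/\fm\overline{I^n}$, whose $K$-dimension equals $\mu(\overline{I^n})$; hence $\ell(I) = 1 + d$, where $d$ is the polynomial degree of growth of $\mu(\overline{I^n})$ as $n \to \infty$.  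The theorem then reduces to the combinatorial statement that $d$ equals the largest dimension of a compact face of $\NP(I)$.

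The exponent set of the integrally closed monomial ideal $\overline{I^n}$ is exactly $n\NP(I) \cap \Z_{\ge 0}^n$, so its minimal generators correspond to componentwise-minimal lattice points in this set.  The key geometric observation is that these minimal points are precisely the lattice points lying on the compact faces of $n\NP(I)$: a compact face is cut out by a supporting hyperplane $\mathbf w \cdot \mathbf x = c$ with $\mathbf w$ strictly positive (since the recession cone of $\NP(I)$ is $\RR_{\ge 0}^n$), and for a lattice point $\mathbf b$ on such a face with $b_i > 0$ one has $\mathbf w \cdot (\mathbf b - \mathbf e_i) = c - w_i < c$, which forces $\mathbf b - \mathbf e_i \notin \NP(I)$.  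Conversely, in the relative interior of any non-compact face the recession cone supplies a positive coordinate direction in which one can retreat without exiting $\NP(I)$, so those points cannot be componentwise-minimal.

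The problem now reduces to an Ehrhart-type lattice point count on the $n$-th dilates of the finitely many compact faces of $\NP(I)$.  A compact face of dimension $d'$ contributes $\Theta(n^{d'})$ lattice points, and inclusion--exclusion over the face lattice (to avoid double-counting of points on shared sub-faces) yields $\mu(\overline{I^n}) = \Theta(n^d)$, where $d = \max_F \dim F$ over compact faces $F$.  Combined with the identification in the first paragraph, this gives $\ell(I) = d + 1$.  The main obstacle is the geometric bridge in the middle paragraph: bookkeeping the lattice points that lie on multiple compact faces, and ensuring that the boundary lattice points (where some $b_i = 0$, so subtracting $\mathbf e_i$ is not available) do not spoil the leading asymptotic.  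Once this is handled, the dominant contribution comes from a single top-dimensional compact face whose Ehrhart leading coefficient is its (positive) relative volume, ruling out any accidental cancellation.
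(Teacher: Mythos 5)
First, note that the paper does not prove this statement: it is quoted from Bivi\`a-Ausina's article \cite{BA} as background, so there is no internal proof to compare against. Your overall strategy --- identify $\ell(I)$ with $\dim\overline{\mathcal R(I)}/\fm\overline{\mathcal R(I)}$ via the module-finite extension $\mathcal R(I)\subseteq\overline{\mathcal R(I)}$, and then read off the growth degree of $\mu(\overline{I^n})$ from lattice points of $n\NP(I)$ --- is a reasonable route, and your first paragraph is essentially fine (lying over gives the equality of fiber dimensions, and Gordan's lemma gives module-finiteness). The forward half of your geometric claim is also correct: every lattice point on a compact face of $n\NP(I)$ is componentwise minimal, which yields $\mu(\overline{I^n})=\Omega(n^d)$ and hence $\ell(I)\ge d+1$, where $d$ is the top dimension of a compact face.

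The gap is the converse of your ``key geometric observation,'' which is false as stated: componentwise-minimal lattice points of $\NP(I)$ need \emph{not} lie on compact faces. Take $I=(x^3,y^2)$, so $\NP(I)=\{2u+3v\ge 6\}\cap\RR_{\ge0}^2$. The unique compact edge carries only the lattice points $(3,0)$ and $(0,2)$, yet $(2,1)$ is a minimal generator of $\overline{I}=(x^3,x^2y,y^2)$ and lies in the interior of $\NP(I)$. Your ``retreat along the recession cone'' argument only produces a point $\mathbf b-\epsilon v\in\NP(I)$ for small $\epsilon>0$, whereas non-minimality requires the full lattice step $\mathbf b-\mathbf e_i\in\NP(I)$; nothing is contradicted. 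Consequently the upper bound $\mu(\overline{I^n})=O(n^d)$ is unproven, and this is where the real work lies: the minimal points you are not accounting for sit near, but not on, the boundary of $n\NP(I)$, and a priori there could be $\Theta(n^{\dim\np(I)})$ of them, where $\dim\np(I)$ can strictly exceed $d$ (the paper's own $I_2=(xy,x^4z^4,y^4z^4)$ has $\dim\np(I_2)=2$ but $d=1$). The inclusion--exclusion over the face lattice that you flag as the ``main obstacle'' does not touch this issue, since these points lie on no compact face at all. What is actually needed is that every minimal lattice point lies within a bounded, $n$-independent distance of the union of the compact faces of $n\NP(I)$, together with an $O(n^d)$ count for such a neighborhood; this is true but requires a separate argument (for instance, writing a minimal $\mathbf b$ as $\mathbf p+\mathbf v$ with $\mathbf p\in n\np(I)$ and $\mathbf v\in[0,1)^n$, and then showing $\mathbf p$ cannot sit deep inside a non-compact face or in the interior). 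As written, your argument establishes only the inequality $\ell(I)\ge d+1$.
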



\begin{ex}\label{ex:starter} Let $R = K[x, y, z]$, and let $I_1 = (xy, xz, yz)$ and $I_2 = (xy, y^4z^4, z^4x^4)$.  Using the notation that $(u, v, w)$ is the exponent vector corresponding with $x^uy^vz^w$, the bounding halfspaces for the Newton polyhedra are as follows:

\[
    \NP(I_1) = \begin{cases} u + v \ge 1\\
    u + w \ge 1\\
    v + w \ge 1\\
    u + v + w \ge 2\\
    u, v, w \ge 0,
    \end{cases}
\quad \text{and} \quad
    \NP(I_2) = \begin{cases}
        u + 3v \ge 4\\
        3u + v \ge 4\\
        4u + w \ge 4\\
        4v + w \ge 4\\
        u, v, w \ge 0
    \end{cases}
\]

Figures \ref{fig:easyExNP} and \ref{fig:analySpread2} are depictions of $\NP(I_1)$ and $\NP(I_2)$ respectively, as well as the corresponding Newton polytopes shaded.  Observe $\NP(I_1)$ has a compact triangle in on its boundary, so $\ell(I_1) = 3$.  This compact triangle is also $\np(I_1)$.  However, in Figure 2, $\NP(I_2)$ has no such two-dimensional compact boundary.  Instead, the largest dimension of a compact face of $\NP(I_2)$ is one, as highlighted in blue, so $\ell(I_2) = 2$.  Intuitively, this is because $\np(I_2)$ ``leans forward'' away from the origin, so when adding $\RR_{\ge 0}^3$, $\NP(I_2)$ takes up space behind $\np(I_2)$.
    
\end{ex}

\begin{minipage}{.5\textwidth}
  \centering
  \includegraphics[width=.6\linewidth]{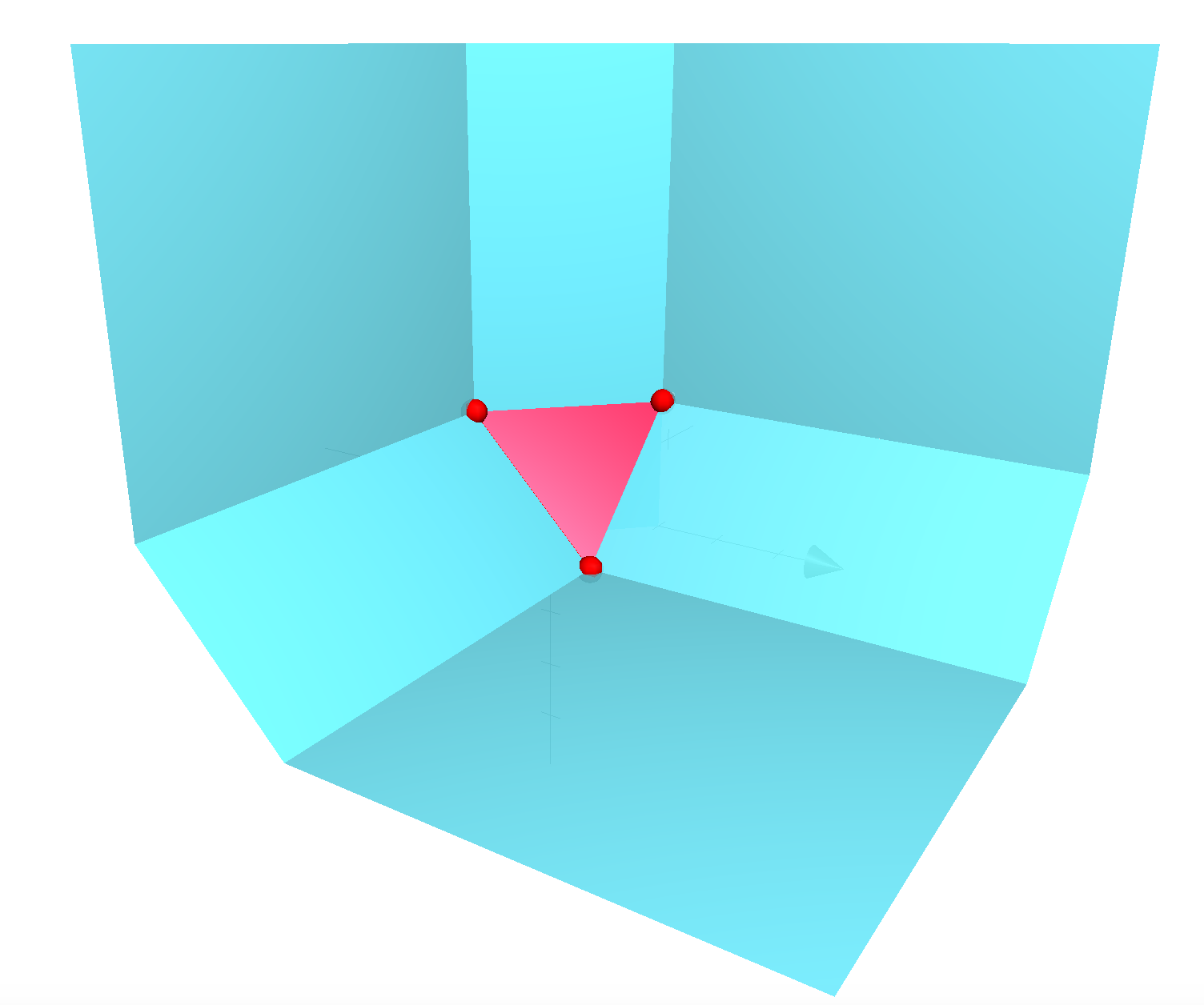}
  \captionof{figure}{${\rm NP}(xy, yz, xz)$}
  \label{fig:easyExNP}
\end{minipage}%
\begin{minipage}{.5\textwidth}
  \centering
  \includegraphics[width=.6\linewidth]{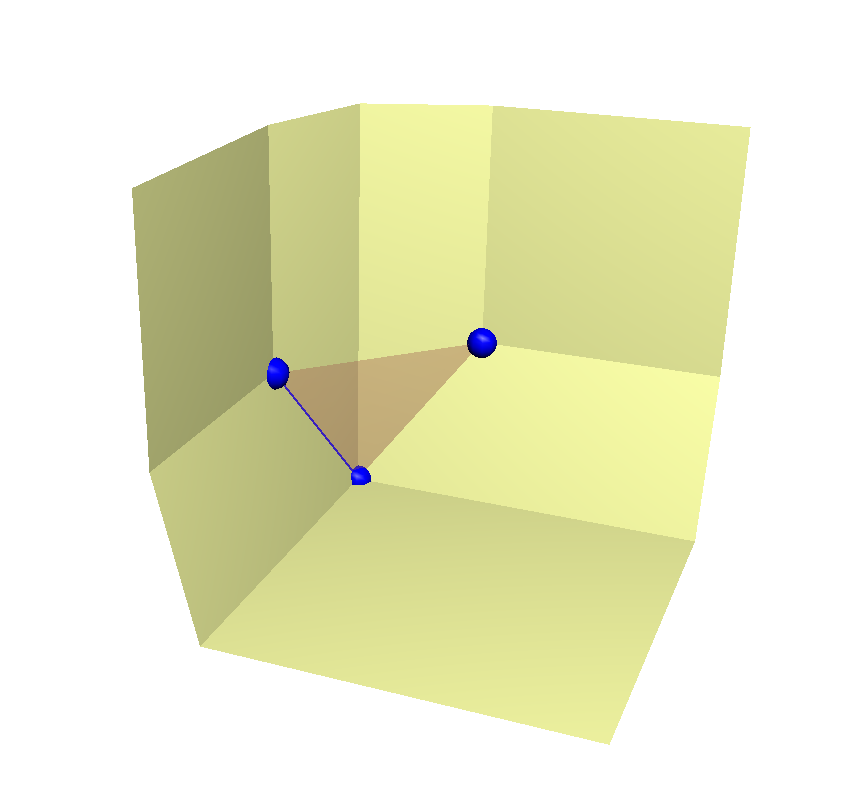}
  \captionof{figure}{$\NP(xy, y^4z^4, x^4z^4)$}
  \label{fig:analySpread2}
\end{minipage}

\begin{rem} In this paper, we will use $\mathbf u = (u_1, \ldots, u_n)$ to denote variables for the convex geometric objects, such as the exponent vectors for a ring in $n$ indeterminates.

\end{rem}

\section{Bounding the analytic spread with the Newton polytope}

In this section, we will detail a method using the equations of halfspaces and hyperplanes of the Newton polytope to bound the analytic spread.  We will prove this in more generality, as the result only requires that a polyhedron be the Minkowski sum of a polytope and the first orthant.

\begin{lem}\label{lem:cpctFaceSum} Let $E$ be a polytope, and let $N$ be the Minkowski sum, $N = E + \RR_{\ge 0}^n$.  Then, every compact face of $N$ is also a compact face of $E$.  
    
\end{lem}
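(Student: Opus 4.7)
The plan is to invoke the standard characterization of faces of a polyhedron as minimizers of linear functionals, and then exploit the special structure of the recession cone of $N$. Recall that every (nonempty) face $F$ of a polyhedron $P$ can be written as
$$F = \{x \in P : \mathbf{v} \cdot x = \min_{y \in P} \mathbf{v} \cdot y\}$$
for some vector $\mathbf{v}$ on which the functional is bounded below on $P$. I would open the argument by fixing a compact face $F$ of $N$ and choosing such a defining vector $\mathbf{v} \in \RR^n$.

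Next I would analyze what $\mathbf{v}$ must look like. Since the recession cone of $N$ is $\RR_{\geq 0}^n$, the functional $\mathbf{u} \mapsto \mathbf{v} \cdot \mathbf{u}$ is bounded below on $N$ only when all entries of $\mathbf{v}$ are nonnegative. The key refinement is that the entries must be \emph{strictly} positive: if some coordinate $v_i = 0$, then for any $x \in F$ we may write $x = e + r$ with $e \in E$, $r \in \RR_{\geq 0}^n$, and then $x + \lambda \mathbf{e}_i \in N$ for all $\lambda \geq 0$ with $\mathbf{v} \cdot (x + \lambda \mathbf{e}_i) = \mathbf{v} \cdot x$, so the entire ray $x + \RR_{\geq 0}\mathbf{e}_i$ lies in $F$, contradicting compactness. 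Hence $\mathbf{v} > 0$ componentwise.

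With $\mathbf{v}$ strictly positive, for any decomposition $\mathbf{u} = e + r$ with $e \in E$ and $r \in \RR_{\geq 0}^n$ one has $\mathbf{v} \cdot \mathbf{u} = \mathbf{v} \cdot e + \mathbf{v} \cdot r \geq \mathbf{v} \cdot e$, with equality if and only if $r = 0$. Therefore
$$\min_{\mathbf{u} \in N} \mathbf{v} \cdot \mathbf{u} = \min_{e \in E} \mathbf{v} \cdot e,$$
and the minimizers on $N$ are exactly the minimizers on $E$; that is, $F = \{e \in E : \mathbf{v} \cdot e = \min_{y \in E} \mathbf{v} \cdot y\}$, which is a face of $E$. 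Since $E$ is a polytope, every face of $E$ is compact, so this completes the identification.

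The main obstacle is the step showing $\mathbf{v}$ must be strictly positive: the choice of $\mathbf{v}$ defining a face is not unique, so one must verify that compactness of $F$ forces this structural condition rather than merely being compatible with it. The ray argument above is what makes it work.
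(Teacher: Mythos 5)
Your proof is correct, but it takes a different route from the paper's. The paper invokes, as a black box, the structural fact that every face of a Minkowski sum decomposes as a Minkowski sum of faces of the summands; since the only compact face of $\RR_{\ge 0}^n$ is the origin, a compact face of $N$ must be (a face of $E$) $+\{0\}$. You instead argue directly with exposing functionals: you show that any $\mathbf{v}$ whose minimizers over $N$ form a compact face must be componentwise strictly positive (the ray argument $x+\lambda\mathbf{e}_i\subseteq F$ when $v_i=0$ is exactly the right obstruction), and then that strict positivity forces the minimizers over $N$ and over $E$ to coincide. Each step checks out, including the chain $m=\mathbf{v}\cdot\mathbf{u}\ge\mathbf{v}\cdot e\ge\min_E\ge\min_N=m$ that pins down $r=0$. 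What your approach buys is self-containedness — no appeal to the Minkowski-sum face decomposition — and as a bonus it essentially re-derives the paper's Lemma \ref{lem:strictDec} (your ``strictly positive'' is their ``strictly decreasing'' up to a sign/min-vs-max convention), so the two results could be unified. The cost is a slightly longer argument and the need to recall that every face of a polyhedron is exposed, which you use implicitly when writing $F$ as a set of minimizers.
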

\begin{proof} The facets of a Minkowski sum can be written as the Minkowski sum of faces.  In our case, the faces of $N$ are a Minkowski sum of faces of $E$ and $\RR_{\ge 0}^n$.  But the only compact face of $\RR_{\ge 0}^n$ is the origin, so every compact face of $N$ has a corresponding compact face in $E$.
\end{proof}

By definition, given a polyhedron $P$, $F$ is a face of $P$ if and only if there exists a linear functional $w$ and a real number $b$ such that $F = \{w(x) = b : x \in P\}$, and $w(y) \le b$ for all $y \in P$.

\begin{lem}\label{lem:strictDec} Let $E$ and $N$ be as in Lemma \ref{lem:cpctFaceSum}.  Let $F = \{w(x) = b\}$ be a face of $E$.  Then $F$ is a face of $N$ if and only if $w$ is strictly decreasing.  That is, for any coordinate vector $e_i$, $w(e_i) < 0$.  Alternatively, this is equivalent to every coefficient of $w$ being negative.
    
\end{lem}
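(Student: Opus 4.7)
The plan is to prove this by directly analyzing how the linear functional $w$ behaves on the Minkowski summand $\mathbb{R}_{\geq 0}^n$. I read the statement as asserting that the same pair $(w, b)$ which defines $F$ as a face of $E$ also defines $F$ as a face of $N$ if and only if every coefficient of $w$ is strictly negative; this is the interpretation consistent with the definition of a face quoted immediately above the lemma. The key geometric idea is that when we pass from $E$ to $N = E + \mathbb{R}_{\geq 0}^n$, the functional $w$ is bounded above on $N$ (and still attains its maximum on $F$) precisely when $w$ is non-positive on each ray direction $e_i$, and the boundedness of $F$ then forces strict negativity.

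For the forward direction, assume $w = (w_1, \ldots, w_n)$ has all negative entries. I would take any $y \in N$, write $y = x + r$ with $x \in E$ and $r \in \mathbb{R}_{\geq 0}^n$, and compute $w(y) = w(x) + \sum_i w_i r_i$. Since $F$ is a face of $E$ defined by $w(x) = b$, we have $w(x) \le b$, and since each $w_i < 0$ with $r_i \ge 0$, the second sum is $\le 0$. Hence $w(y) \le b$ on $N$. Equality forces both $w(x) = b$ (so $x \in F$) and $\sum w_i r_i = 0$; the latter, together with $w_i < 0$ and $r_i \ge 0$, forces $r_i = 0$ for every $i$, so $r = 0$ and $y = x \in F$. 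This shows $\{y \in N : w(y) = b\} = F$, i.e., $F$ is a face of $N$ cut out by $w$.

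For the backward direction, assume $(w, b)$ defines $F$ as a face of $N$. Fix $x \in F$ and any $i$. For every $t \ge 0$ we have $x + t e_i \in N$, so $b \ge w(x + t e_i) = b + t w(e_i)$, forcing $w(e_i) \le 0$. If $w(e_i) = 0$ for some $i$, then $w(x + t e_i) = b$ for every $t \ge 0$, so the entire unbounded ray $\{x + t e_i : t \ge 0\}$ lies in the face defined by $w$ on $N$, which equals $F$. But $F \subseteq E$ and $E$ is a polytope (bounded), a contradiction. Therefore $w(e_i) < 0$ for every $i$.

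The main obstacle is really just being careful about the interpretation of the statement: a priori a set $F$ could happen to be a face of both $E$ and $N$ via different defining functionals, so the quantifier over $w$ matters. Once one fixes the convention that the same $(w, b)$ is used on both sides, the proof is the short computation above, using only that $\mathbb{R}_{\geq 0}^n$ has the coordinate rays as its extreme directions and that $E$ is bounded.
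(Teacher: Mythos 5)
Your proof is correct and follows essentially the same route as the paper: decompose points of $N$ as $x+v$ with $x\in E$, $v\in\RR_{\ge 0}^n$, and test $w$ on the coordinate rays. The only difference is that you justify the strict inequality $w(e_i)<0$ carefully (first $\le 0$ from boundedness of $w$ on $N$, then ruling out $=0$ via boundedness of $E$), a step the paper asserts more tersely.
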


\begin{proof}

Assume $F$ is a face of $N$.  Then $F = \{x \in N : w(x) = b\}$ and $w(y) \le b$ for all $y \in N$.  In other words, $w(x) = b$ if and only if $x \in F$.  Now, let $y = x + e_i$, where $x \in F$.  Then $w(y) < b$, but also $w(y) = w(x) + w(e_i) = b + w(e_i)$.  Thus, $w(e_i) < 0$.

Now suppose $w$ is strictly decreasing.  We aim to show that, for any $y \in N$, it follows $w(y) \le b$, with equality holding if and only if $y \in F$.  Let $y = x + v$, where $x \in E$, and $v \in \RR^n_{\ge 0}$.  By definition of strictly decreasing, $w(y) = w(x + v) \le w(x)$, with equality only when $v = 0$.  If $w(y) = b$, then we have $w(y) = b \le w(x) \le b$. So, $w(y) = w(x) = b$, attaining equality, and thus $y \in E$. That is, $F = \{ w(x) = b : x \in N\}$.  The condition that $w(y) \le b$ is evident.
\end{proof}

To get our final bound, we define one more object:

\begin{defn}
    Let $P\subseteq\mathbb{R}^n$ be a polyhedron and $F$ be a face of $P$.  Then the \textbf{normal cone} of $P$ at $F$ is $$N_P(F)=\{\mathbf{x}\in\mathbb{R}^n|\mathbf{x}\cdot(\mathbf{y}-\mathbf{z})\leq0\mbox{ for all }\mathbf{y}\in P,\mathbf{z}\in C\}.$$
    The \textbf{normal fan} $\mathcal{N}(P)$ is the fan consisting of the normal cones of $P$ at each of its faces.
\end{defn}
The normal fan $\mathcal{N}(P)$ is a complete fan (that is, the union of its cones is $\mathbb{R}^n$), its one-dimensional cones are the $\mathbf{h}\in\mathbb{R}^n$ where $\mathbf{h}\cdot\mathbf{x}\leq c$ is a facet-defining halfspace fo $P$ for some $c$.  The lineality space of $N(P)$ (the largest linear space contained in every one of its cones) is the span of the $\mathbf{w}\in\mathbb{R}^n$ such that $P$ is contained in the hyperplane $\mathbf{w}\cdot\mathbf{x}=c$ for some $c$.  . 

We now provide a way to extract analytic spread from the hyperplanes and halfspaces of the Newton polytope.

\begin{thm}\label{thm:spreadCalc}
    Let $E$ be a polytope, and let $N = E + \RR^n_{\ge 0}$, where the addition is the Minkowski sum.  Let $E$ have a minimal set of hyperplanes given by $$\begin{array}{c}\mathbf{w}_1\cdot\mathbf{u}=b_1\\
    \mathbf{w}_2\cdot\mathbf{u}=b_2\\
    \vdots\\
    \mathbf{w}_s\cdot\mathbf{u}=b_s\end{array}$$
    and (facet-defining) halfspaces given by
    $$
    \begin{array}{c}
    \mathbf{h}_1\cdot\mathbf{u}\leq c_1\\
    \mathbf{h}_2\cdot\mathbf{u}\leq c_2\\
    \vdots\\
    \mathbf{h}_t\cdot\mathbf{u}\leq c_t
    \end{array}.
    $$

    Suppose that there exist $\alpha_1,\dots,\alpha_s\in\mathbb{R}$ and $\beta_1,\dots,\beta_t\in\mathbb{R}_{\geq 0}$ such that $$ \mathbf{W}:=\sum_{i=1}^s\alpha_i\mathbf{w}_i+\sum_{i=1}^t\beta_j\mathbf{h}_j$$ has all negative coefficients.  Let $k$ denote the minimal number of non-zero $\beta_j$ terms needed to achieve all negative coefficients.  Then the maximal dimension of a compact face of $N$ is bounded above by $n - (s+k)$.

    In the case that $E = \np(I)$ for some monomial ideal $I$, $\ell(I) \le n + 1 - (s+k)$.

\end{thm}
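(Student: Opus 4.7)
The plan is to analyze an arbitrary compact face $F$ of $N$ through its normal cone $N_E(F)$, extracting the bound via Carathéodory's theorem. I would begin by invoking Lemmas \ref{lem:cpctFaceSum} and \ref{lem:strictDec}: every compact face $F$ of $N$ is also a compact face of $E$ and admits a supporting linear functional $\mathbf{W}$ with all negative coordinates. By the definition of the normal cone, any such $\mathbf{W}$ lies in $N_E(F)$, since $F$ is exactly the argmax of $\mathbf{W}$ on $E$.

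Next I would invoke the standard description of the normal cone of a polyhedron in terms of its defining hyperplanes and halfspaces: under the presentation given,
\[
    N_E(F) = \mathrm{span}(\mathbf{w}_1,\ldots,\mathbf{w}_s) + \mathrm{cone}\{\mathbf{h}_j : j \in J(F)\},
\]
where $J(F)$ indexes the facet-defining halfspaces whose bounding hyperplanes contain $F$. Minimality of the hyperplane list forces the $\mathbf{w}_i$ to be linearly independent, so the lineality $L := \mathrm{span}(\mathbf{w}_1,\ldots,\mathbf{w}_s)$ has dimension exactly $s$; meanwhile $\dim N_E(F) = n - \dim F$. Passing to the quotient, $N_E(F)/L$ is a pointed cone in $\mathbb{R}^n/L$ of dimension $n - \dim F - s$, generated by the images $\bar{\mathbf{h}}_j$ for $j \in J(F)$. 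Since $\bar{\mathbf{W}}$ lies in this pointed cone, Carathéodory's theorem for cones expresses it as a non-negative combination of at most $n - \dim F - s$ of the $\bar{\mathbf{h}}_j$. Lifting back (and absorbing the discrepancy into $L$) yields an expression $\mathbf{W} = \sum_i \alpha'_i \mathbf{w}_i + \sum_j \beta'_j \mathbf{h}_j$ with all $\beta'_j \geq 0$ and at most $n - \dim F - s$ of them nonzero. By the minimality defining $k$, this forces $k \leq n - \dim F - s$, so $\dim F \leq n - (s+k)$.

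For the monomial ideal case, I would combine the bound just proved with $\NP(I) = \np(I) + \mathbb{R}_{\geq 0}^n$ and Theorem \ref{thm:cpctFace}, which adds one to the maximal compact-face dimension of $\NP(I)$; this yields $\ell(I) \leq n + 1 - (s+k)$. The main obstacle is the polyhedral input: the normal-cone formula above and the identification of the lineality of $N_E(F)$ with $\mathrm{span}(\mathbf{w}_i)$ at the correct dimension $s$. Both are standard consequences of Farkas/minimality, after which the Carathéodory count is routine.
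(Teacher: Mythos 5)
Your proposal is correct, and it lives in the same framework as the paper's proof: both reduce to the fact (Lemmas \ref{lem:cpctFaceSum} and \ref{lem:strictDec}) that the compact faces of $N$ are exactly the faces $F$ of $E$ whose normal cone $N_E(F)$ contains a vector with all negative entries, and both then count generators of that normal cone against the hyperplane/halfspace data. Where you differ is in how the codimension count is closed, and your version is the more robust one. The paper argues from the cone side: it takes the minimum-dimensional cone $C$ of $\mathcal{N}(E)$ meeting the open negative orthant, writes an interior point as a positive combination of $m$ facet normals plus the lineality space, notes $m\ge k$, and asserts that the corresponding face has codimension exactly $s+m$. That last assertion requires the $m$ facet normals to be linearly independent modulo the lineality space, which can fail when the polytope is not simple along $F$ --- precisely the phenomenon behind Remark \ref{rem:nonEqual}. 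You argue from the face side: for an \emph{arbitrary} compact face $F$ you use $\dim N_E(F) = n - \dim F$, the irredundant description $N_E(F) = L + \operatorname{cone}\{\mathbf{h}_j : j \in J(F)\}$ with $\dim L = s$, and Carath\'eodory in the quotient to produce a representation of the all-negative supporting functional with at most $n - \dim F - s$ nonzero $\beta_j$; minimality of $k$ then gives $k \le n - \dim F - s$ directly, with no simplicity hypothesis. (Note that the all-negative vector you produce need not be the $\mathbf{W}$ of the hypothesis, but since $k$ is minimized over all such representations this is harmless.) So your route buys a watertight treatment of exactly the degenerate case the paper's own remark worries about, at the cost of invoking the normal-cone formula and Carath\'eodory explicitly; the final passage to $\ell(I)\le n+1-(s+k)$ via Theorem \ref{thm:cpctFace} is the same in both.
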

\begin{proof}
Since $\mathcal{N}(E)$ is a complete fan, at least one of its cones intersects the inerior of the negative orthant, denoted $\mathfrak{O}$.  Let $C=\mbox{cone}(\mathbf{h}_{p_1}+\dots+\mathbf{h}_{p_m})+\mbox{span}(\mathbf{w}_{1}+\dots+\mathbf{w}_s)$ be minimum-dimensional cone of $N(E)$ which intersects $\mathfrak{O}$.  Then there is some point $\mathbf{o}\in\mathfrak{O}$ such that, for $\alpha_1,\dots,\alpha_s\in\mathbb{R}$ and positive $\beta_1,\dots,\beta_m$ we have $\mathbf{o}=\sum_{i=1}^s\alpha_i\mathbf{w_i}+\sum_{i=1}^m\beta_i\mathbf{h}_{p_i}$.  We note that, by assumption $m\geq k$.  Let $F$ be the face of $E$ such that $C=N_E(F)$, and let $B=\alpha_1b_1+\dots+\alpha_tb_t+\beta_1c_{p_1}+\dots+\beta_mc_{p_m}$.  Then, $F$ is the face cut out by $\mathbf{o}\cdot\mathbf{x}=B$, and is the face of $E$ lying on the hyperplanes defined by $\mathbf{h}_{p_i}\cdot\mathbf{x}=c_{p_i}$.  In particular, $F$ has codimension $(s+m)$.  Since all entries of $F$ are negative, we see that $\mathbf{o}$ defines a strictly decreasing linear functional, and thus $F$ is a face of $N$ as well.  Since $F$ is compact, we conclude that the maximum dimension of a compact face of $N$ is $n-(s+m)\leq n-(s+k)$.

\end{proof}

\begin{rem}\label{rem:nonEqual} The reason we have an inequality is that the argument above is contingent on observing the face of the Newton polytope induced by the intersection of $k$ of its faces.  This has, at least, codimension $k$, but there are examples in which the intersection of $k$ faces gives codimension greater than $k$.  Consider the ideal, $I = (x^{20}, x^8y^2, x^{14}yz^{10}, x^2y^{10}, x^5y^6z^{10}, y^{30}, xy^{20}z^{10})$.  Its Newton polytope and polyhedron are given in Figure \ref{fig:problemIdeal}.  
This Newton polytope is defined by the following inequalities:
$$\begin{cases}
    -u-6v\leq -20\\
    -4u-3v\leq -38\\
    -10u-v\leq -30\\
    -70u-20v+13w\leq 340\\
    -50u-90v+21w\leq -580\\
    -w\leq 0\\
    w\leq 10\\
    15u+10v+8w\leq 300\\
    190u+130v+111w\leq 3900
\end{cases}$$
We note that an all-negative vector can be constructed using the coefficient vector of one of the first five inequalities along with he coefficient vector of the sixth.  However, not all of the faces defined by the intersections with the supporting hyperplanes of these halfspaces have the same codimension.  The intersection of the Newton polytope with the hyperplanes defined by one of $ -u-6v= -20, -4u-3v= -38$, or $-10u-v= -30$ along with $-w=0$ have codimension two (and correspond to the red edges in Figure \ref{fig:problemIdeal}). The intersection of the Newton polytope with the hyperplanes defined by one of $ -70u-20v+13w= 340$ or $-50u-90v+21w= -580$ along with $-w=0$ have codimension three (and correspond to the marked points in Figure \ref{fig:problemIdeal}).  
    
\end{rem}

\begin{figure}
    \centering
    \includegraphics[width=0.5\linewidth]{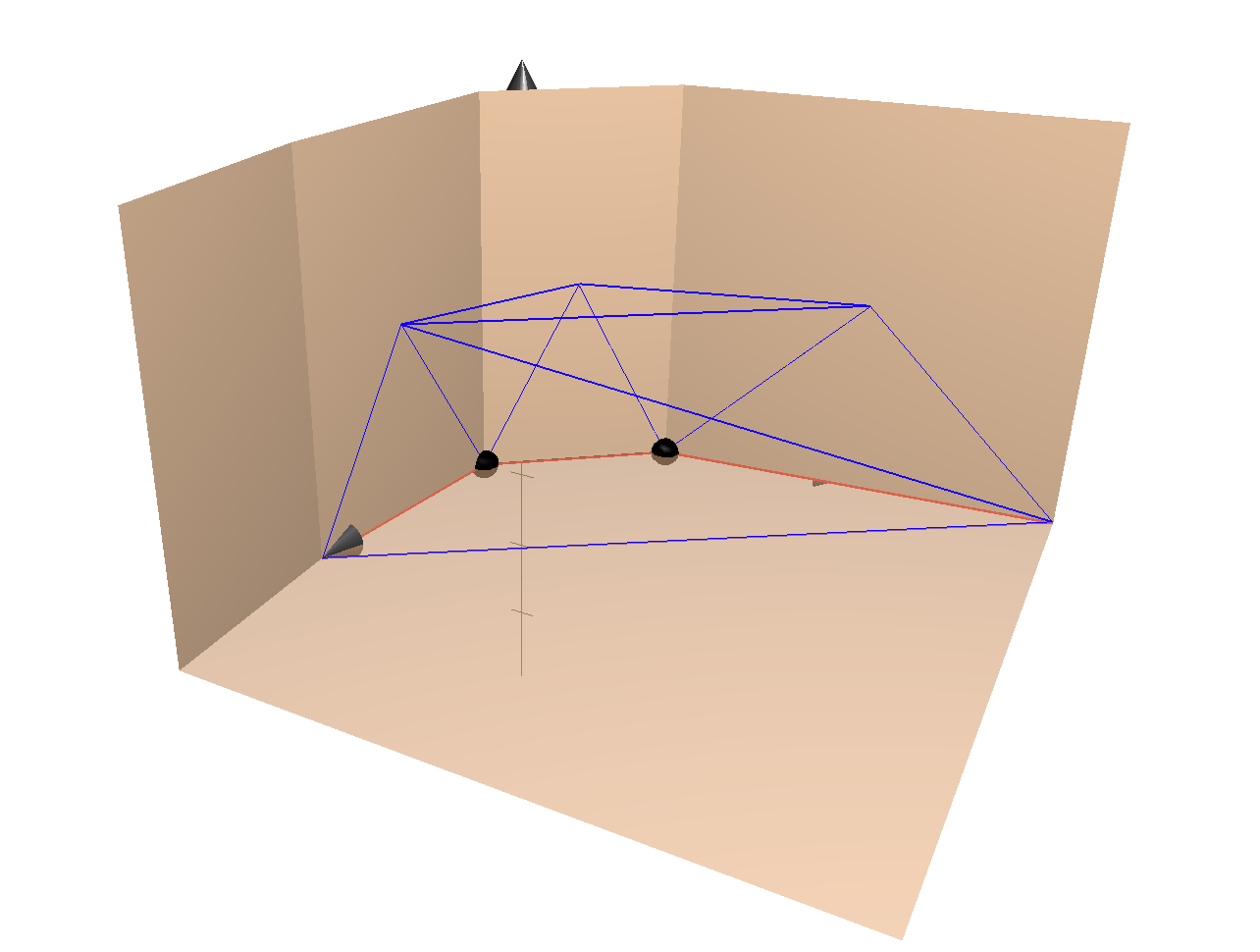}
    \caption{Newton polytope and polyhedron of $I$ from Remark \ref{rem:nonEqual}}
    \label{fig:problemIdeal}
\end{figure}

\begin{ex}
    Returning to Example \ref{ex:starter}, we note that the newton polytope for $I_1=(xy,xz,yz)$ is a 2-dimensional polytope contained in the hyperplane defined by $u+v+w=2$, and thus by Theorem \ref{thm:spreadCalc} it follows that $\ell(I_1)=3$.  For $I_2=(xy,y^4z^4,x^4z^4)$, we note that the newton polytope has halfspaces \[\begin{cases}
        u  - v -w \le 0\\
        -u  + v -w\le 0\\
        -u  -v +w \le 0
   
    \end{cases}\]
    and hyperplane \[-2u-2v+w=-4.\] Since the sum $2(-2u-2v+w) + 3(u-v-w)=-u-7v-w$ has all negative coefficients, it follows that $\ell(I_2)\le2$. However, since $I_2$ is not principal and not zero, $\ell(I_2) = 2$ (see proof of \ref{thm:trivBasic}).


\end{ex}

\begin{rem} For an arbitrary graded family of monomial ideals, one can define an analogue of Rees algebras and analytic spread. There are also polyhedra that associated with general graded families of monomial ideals called \textit{Newton-Okounkov bodies}.  Remarkably, Bivià-Ausina's theorem generalizes, in that analytic spread of a general graded family is with one more than maximal dimension of a compact face of the Newton-Okounkov body.  In many cases, Newton-Okounkov bodies can be written as a Minkowski sum of the form in Theorem \ref{thm:spreadCalc}.  Thus, one can use Theorem \ref{thm:spreadCalc} to calculate analytic spread.  For more details on Newton-Okounkov bodies and analytic spread of graded families, the authors recommend \cite{HN}.
    
\end{rem}

\section{Basic monomial ideals}

In this section, we aim to categorize precisely when a monomial ideal is basic.  First, let us discuss the cases of ideals with a small number of minimal generators.  

\begin{thm}\label{thm:trivBasic} If $I$ is an ideal in a polynomial ring with infinite residue field such that $\mu(I) \le 2$, then $I$ is basic.
    
\end{thm}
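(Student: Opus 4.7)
The plan is to invoke Corollary \ref{cor:basicCriterion}, which reduces the statement to showing $\ell(I) = \mu(I)$ whenever $\mu(I) \le 2$. Since $I$ is always a reduction of itself (take $n = 0$ in Definition \ref{def:red}), we automatically have $\ell(I) \le \mu(I)$; the work is to establish the reverse inequality in each of the three cases $\mu(I) \in \{0, 1, 2\}$.

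I would dispose of the two easy cases first. If $\mu(I) = 0$ then $I = 0$ and $\ell(I) = 0$ follows from Definition \ref{def:reesAnSp}. If $\mu(I) = 1$, suppose for contradiction that $\ell(I) = 0$. Since the residue field is infinite, Theorem \ref{thm:minred} produces a minimal reduction $J$ of $I$ with $\mu(J) = \ell(I) = 0$, i.e., $J = 0$. Because a reduction satisfies $\overline{J} = \overline{I}$, this forces $\overline{I} = 0$ and hence $I = 0$, contradicting $\mu(I) = 1$. So $\ell(I) = 1 = \mu(I)$.

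The substantive case is $\mu(I) = 2$. Assume for contradiction that $\ell(I) < 2$. The subcase $\ell(I) = 0$ is ruled out exactly as above, so there is a principal minimal reduction $J = (h) \subseteq I$ with $h \ne 0$. Here I would use the fact that $R = K[x_1, \dots, x_n]$ is a UFD, hence a normal domain, and in a normal domain every non-zero principal ideal is integrally closed: indeed, if $r \in \overline{(h)}$ satisfies $r^n + a_1 r^{n-1} + \cdots + a_n = 0$ with $a_i \in (h^i)$, then dividing by $h^n$ in the fraction field exhibits $r/h$ as integral over $R$, so normality gives $r/h \in R$ and $r \in (h)$. Applying this to the reduction $J$ yields $\overline{I} = \overline{J} = (h)$, so that $I \subseteq \overline{I} = (h) = J \subseteq I$, forcing $I = J$ principal and contradicting $\mu(I) = 2$. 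Thus $\ell(I) = 2 = \mu(I)$ and $I$ is basic.

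The only nontrivial input is the normal-domain fact $\overline{(h)} = (h)$, which is standard, and the case analysis at $\ell(I) = 0$ to ensure the claimed minimal reduction is genuinely a principal ideal with a non-zero generator. I expect the $\mu(I) = 2$ step to be where one has to be careful: the argument collapses if one leaves the polynomial-ring (or more generally, normal-domain) setting, since the existence of principal reductions of $2$-generated ideals is only prevented by integral-closedness of principal ideals.
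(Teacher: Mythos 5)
Your proof is correct, and its skeleton matches the paper's: reduce to showing $\ell(I)=\mu(I)$ via Corollary \ref{cor:basicCriterion}, note $\ell(I)\le\mu(I)$, rule out $\ell(I)=0$ for nonzero $I$, and then show that $\ell(I)=1$ forces $I$ to be principal. The one genuine difference is how that last implication is obtained: the paper simply cites Huckaba \cite{Huckaba} for the fact that an ideal of analytic spread one in a polynomial ring is principal, whereas you prove it directly by extracting a principal minimal reduction $(h)$ and using that nonzero principal ideals in a normal domain are integrally closed, so that $I\subseteq\overline{I}=\overline{(h)}=(h)\subseteq I$. Your route is self-contained and makes explicit which hypothesis on the ring is actually doing the work (normality, plus the infinite residue field to guarantee minimal reductions of the right size exist); the paper's route is shorter but leans on an external reference. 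Two minor cautions: your parenthetical justification of $\ell(I)\le\mu(I)$ (``$I$ is a reduction of itself'') is not really a proof --- the standard reason is that the fiber cone $\bigoplus_n I^n/\mathfrak m I^n$ is generated as a $K$-algebra by the $\mu(I)$-dimensional degree-one piece --- though the inequality is standard and the paper also uses it without proof; and Theorem \ref{thm:minred} as stated is a characterization of minimal reductions rather than an existence statement, so you are implicitly also using the existence of minimal reductions from \cite{HS}, which is fine but worth flagging.
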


\begin{proof} By Corollary \ref{cor:basicCriterion}, it is enough to show $\ell(I) = \mu(I)$.

First, note that if $\ell(I) = 0$ then $I$ is nilpotent, so in a polynomial ring, the only ideal with analytic spread zero is the zero ideal.  Furthermore, from \cite{Huckaba}, the only ideals of a polynomial ring with analytic spread one are principal.  Thus, any  ideal with $\ell(I) = 1$ is also basic.

Finally, if $\mu(I) = 2$, recall $\ell(I) \le \mu(I) = 2$.  Since $I$ is non-zero and non-principal, $\ell(I) \ge 2$, so $\ell(I) = \mu(I)$.
    
\end{proof}

This allows for the polynomial ring with two variables to be a trivial case:

\begin{thm} Let $I$ be a monomial ideal in $K[x, y]$, $K$ infinite.  Then $I$ is basic if and only if $\mu(I) \le 2$.
    
\end{thm}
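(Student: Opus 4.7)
The plan is to reduce both directions to the criterion in Corollary \ref{cor:basicCriterion} together with the dimension bound $\ell(I) \le \dim R$.

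For the ``if'' direction, assume $\mu(I) \le 2$. Then Theorem \ref{thm:trivBasic}, which already handles ideals with at most two generators in any polynomial ring over an infinite field, applies directly and shows that $I$ is basic. So nothing new is needed here.

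For the ``only if'' direction, I will prove the contrapositive: if $\mu(I) \ge 3$, then $I$ is not basic. The key observation is that $R = K[x,y]$ has Krull dimension $2$, and by the general inequality stated right after Definition \ref{def:reesAnSp} we have $\ell(I) \le \dim R = 2$. Thus $\ell(I) \le 2 < \mu(I)$. By Corollary \ref{cor:basicCriterion}, $I$ basic is equivalent to $\mu(I) = \ell(I)$, so this strict inequality rules out $I$ being basic.

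Combining the two directions gives the claimed equivalence. There is no real obstacle; the statement is essentially a consequence of Theorem \ref{thm:trivBasic} on one side and the dimension bound on the other. The only minor thing worth checking is that the hypothesis of Corollary \ref{cor:basicCriterion} (infinite residue field) is satisfied, but this is ensured by our assumption that $K$ is infinite together with the fact that $R = K[x,y]$ is graded with $\mathfrak{m} = (x,y)$ having residue field $K$.
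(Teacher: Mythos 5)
Your proposal is correct and follows essentially the same route as the paper, which also combines Theorem \ref{thm:trivBasic} for the case $\mu(I)\le 2$ with the bound $\ell(I)\le\dim R=2$ and Corollary \ref{cor:basicCriterion} for the converse. You have merely written out the contrapositive step more explicitly than the paper does.
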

\begin{proof} Employ Theorem \ref{thm:trivBasic}, and the fact that $\ell(I) \le \dim R = 2$.
    
\end{proof}

In a polynomial ring three or more variables, we get our first instances of non-trivial basic ideals.  We will prove a criterion for precisely when a monomial ideal is basic shortly.


\begin{lem}\label{lem:numHP} Let $I$ be a monomial ideal in a polynomial ring in $n$ variables over infinite residue field.  Let $m = \mu(I)$, and let $\{\mathbf{w}_i \cdot \mathbf u = b_i : i = 1, \ldots, s\}$ be a minimal set of equations of the hyperplanes on which $\np(I)$ lies.  If $I$ is basic, then $s = n - m + 1$.
    
\end{lem}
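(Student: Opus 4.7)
The plan is to show that if $I$ is basic then $\np(I)$ has dimension exactly $m-1$, and then translate this dimension statement into a count of defining hyperplanes.

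First, I would use Corollary \ref{cor:basicCriterion}: since $I$ is basic, $\ell(I) = \mu(I) = m$. Then Theorem \ref{thm:cpctFace} gives a compact face of $\NP(I)$ of dimension $m-1$. By Lemma \ref{lem:cpctFaceSum}, every compact face of $\NP(I) = \np(I) + \RR_{\ge 0}^n$ is also a face of $\np(I)$, so $\np(I)$ itself has a face of dimension $m-1$. This forces $\dim \np(I) \ge m-1$.

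Next I would produce the matching upper bound. By definition $\np(I)$ is the convex hull of the $m$ exponent vectors of the minimal generators of $I$. A polytope that is the convex hull of $m$ points has dimension at most $m-1$, since its affine hull is spanned by $m-1$ difference vectors. Combining these two inequalities, $\dim \np(I) = m-1$.

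Finally, I would translate this into a statement about $s$. The affine hull of $\np(I)$ is the intersection of all hyperplanes containing $\np(I)$, and its dimension is $m-1$, so its codimension in $\RR^n$ is $n - m + 1$. An affine subspace of codimension $k$ is cut out by exactly $k$ hyperplanes with linearly independent normals, and any smaller collection of hyperplanes gives an intersection of strictly larger dimension. A minimal set of hyperplanes containing $\np(I)$ is therefore a minimal set whose intersection equals this affine hull, so $s = n - m + 1$.

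The one step that needs a little care — and that I would expect to be the main place one could slip — is the last paragraph, justifying that a \emph{minimal} (in cardinality) defining set of hyperplanes for an affine subspace $A$ has cardinality exactly equal to its codimension. The argument is: any hyperplane containing $\np(I)$ contains $A$, so the intersection of any collection of such hyperplanes contains $A$; and if the normals of a collection are linearly dependent, one hyperplane in the collection is redundant (its defining equation is a linear combination of the others restricted to any common point), so a minimal collection has linearly independent normals, giving intersection of codimension equal to its cardinality. Setting this equal to $n - (m-1)$ yields $s = n - m + 1$.
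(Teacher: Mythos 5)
Your proof is correct and follows essentially the same route as the paper's: the paper argues the contrapositive tersely (if $\dim\np(I) < m-1$, equivalently $s > n-m+1$, then no compact face of $\NP(I)$ has dimension $m-1$, so $\ell(I) \ne \mu(I)$), implicitly using the same two facts you make explicit, namely that $\np(I)$ is the hull of $m$ points and that compact faces of $\NP(I)$ are faces of $\np(I)$. Your version is just a more detailed, direct-form writeup of the same argument.
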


\begin{proof}  If the dimension of $\np(I)$ is less than $m - 1$ (i.e. $\np(I)$ lies on more than $n - m + 1$ hyperplanes), then no compact face of $\NP(I)$ can have a dimension of $m - 1$.  Thus $\ell(I) \ne \mu(I)$, so $I$ is not basic.
    
\end{proof}

From this it follows:

\begin{lem}\label{lem:npMax} Let $I$ be a monomial ideal in a polynomial ring in $n$ variables over an infinite field.  with $m = \mu(I)$ such that $\np(I)$ lies on the intersection of $n - m + 1$ hyperplanes.  Then $I$ is basic if and only if the maximum dimensional compact face of $\NP(I)$ is $\np(I)$.
\end{lem}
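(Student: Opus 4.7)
The plan is to combine the standing criterion for basicness (Corollary \ref{cor:basicCriterion}) with Bivi\`a-Ausina's theorem (Theorem \ref{thm:cpctFace}) and Lemma \ref{lem:cpctFaceSum}. Set $m=\mu(I)$. First I would record the dimension of the Newton polytope: since $\np(I)$ is the convex hull of the $m$ exponent vectors of the minimal generators, $\dim\np(I)\le m-1$, and since $\np(I)$ lies on a minimal set of $n-m+1$ affine hyperplanes, $\dim\np(I)\ge n-(n-m+1)=m-1$. Hence $\dim\np(I)=m-1$.

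Next I would reformulate the problem. By Corollary \ref{cor:basicCriterion}, $I$ is basic if and only if $\ell(I)=\mu(I)=m$; and by Theorem \ref{thm:cpctFace}, $\ell(I)$ equals one more than the maximal dimension of a compact face of $\NP(I)$. So the statement reduces to: the maximal compact face of $\NP(I)$ has dimension $m-1$ if and only if it equals $\np(I)$.

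For the reverse direction, if $\np(I)$ is itself a compact face of $\NP(I)$, then since $\dim\np(I)=m-1$, Theorem \ref{thm:cpctFace} gives $\ell(I)\ge m$; combined with the general bound $\ell(I)\le\mu(I)=m$, we obtain $\ell(I)=m$, so $I$ is basic. For the forward direction, assume $I$ is basic, so $\ell(I)=m$, and let $F$ be a compact face of $\NP(I)$ with $\dim F=m-1$. By Lemma \ref{lem:cpctFaceSum}, $F$ is a compact face of $\np(I)$; but $\np(I)$ is itself a polytope of dimension $m-1$, and the only face of a polytope having the same dimension as the polytope is the polytope itself. Hence $F=\np(I)$, which is therefore a (compact) face of $\NP(I)$.

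The argument is essentially a bookkeeping of the two earlier results, so no serious obstacle is expected. The only point requiring care is the ``codimension'' computation in the first step: I need to be clear that the hypothesis refers to a \emph{minimal} set of $n-m+1$ hyperplanes (consistent with the statement of Lemma \ref{lem:numHP}) so that $\dim\np(I)$ is forced to equal $m-1$, rather than being merely bounded above by it.
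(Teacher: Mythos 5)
Your proof is correct and follows the route the paper intends: the paper states this lemma without proof, as a direct consequence of Lemma \ref{lem:numHP} (which pins $\dim\np(I)$ down to $m-1$) together with Theorem \ref{thm:cpctFace}, Corollary \ref{cor:basicCriterion}, and Lemma \ref{lem:cpctFaceSum}, which is exactly the bookkeeping you carry out. Your closing caveat --- that the hypothesis must be read as a \emph{minimal} set of $n-m+1$ hyperplanes so that $\dim\np(I)=m-1$ rather than merely $\le m-1$ --- is the right one and is consistent with the paper's conventions.
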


Now, we have a main result:

\begin{thm}\label{thm:expectedCodim} Let $I$ be a monomial ideal in $K[x_1, \ldots, x_n]$, and assume $\np(I)$ lies on the $s$ hyperplanes, $\{\mathbf{w}_i \cdot \mathbf{u} = b_i : i = 1, \ldots, s\}$, where $\mathbf{w}_i = (a_{i, 1}, \ldots , a_{i, n})$, presented so that the $s \times n$ matrix of $\mathbf w_i$'s is in reduced row echelon form.  Then $I$ is basic if and only if all of the following conditions hold:
\begin{enumerate}
    \item $\mu(I) \le n$,
    \item $s = n - \mu(I) + 1$,
    \item for every $j \in \{1, \ldots, n\}$, there exists $i_j$ such that $a_{i_j, j} > 0$.
\end{enumerate}
\end{thm}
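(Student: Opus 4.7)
The plan is to reduce basicness to a geometric condition via the tools already developed. By Corollary \ref{cor:basicCriterion}, $I$ is basic iff $\mu(I) = \ell(I)$, and by Bivià-Ausina (Theorem \ref{thm:cpctFace}) combined with Lemmas \ref{lem:cpctFaceSum} and \ref{lem:strictDec}, this amounts to the assertion that $\np(I)$ has dimension $\mu(I) - 1$ and is exposed as a face of $\NP(I)$ by a linear functional with strictly negative coefficients. Since a functional that is constant on $\np(I)$ is precisely a linear combination $\sum_i \alpha_i \mathbf{w}_i$, the goal becomes: $I$ is basic iff $s = n - \mu(I) + 1$ and the row span of the $\mathbf{w}_i$'s meets the open negative orthant.

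For necessity, (1) follows from the general bound $\ell(I) \le \dim R = n$, and (2) is exactly Lemma \ref{lem:numHP}. For (3), suppose some column $j$ had no positive entry, i.e., $a_{i,j} \le 0$ for every $i$. Let $\mathbf{v} = \sum_i \alpha_i \mathbf{w}_i$ be any element of the row span. Because the matrix is in reduced row echelon form, each pivot column $p_k$ satisfies $v_{p_k} = \alpha_k$; so if $\mathbf{v}$ has all negative entries, each $\alpha_k$ must be strictly negative. But then $v_j = \sum_i \alpha_i a_{i,j}$ is a sum of products of negatives and non-positives, hence $v_j \ge 0$, contradicting strict negativity of $\mathbf{v}$. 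Thus $j$ must be a pivot column (in which case $a_{k,j} = 1 > 0$ for the associated pivot row) or admit a positive entry in some row, which is condition (3).

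For sufficiency, assume (1)--(3). Condition (2) forces $\dim \np(I) = n - s = \mu(I) - 1$, so $\np(I)$ is a simplex of maximal possible dimension and, if it is a compact face of $\NP(I)$, its dimension $\mu(I)-1$ will match $\ell(I) - 1$. Write $\alpha_i = -\beta_i$ with $\beta_i > 0$. Pivot columns are automatically negative (equal to $-\beta_i$), so the task reduces to finding $\beta \in \RR^s_{>0}$ with $\sum_i \beta_i a_{i,j} > 0$ for each non-pivot column $j$. Using (3), for each non-pivot $j$ pick an index $i_j$ with $a_{i_j, j} > 0$. The plan is to build $\beta$ iteratively: start with $\beta^{(0)} = (1, \dots, 1)$, and for each problematic non-pivot column $j$, add a large multiple of $e_{i_j}$ to $\beta$, exploiting the fact that this only increases $\sum \beta_i a_{i,j'}$ whenever $a_{i_j, j'} \ge 0$.

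The main obstacle is exactly this sufficiency construction. When a non-pivot column has both positive and negative entries, boosting a $\beta_{i_j}$ to fix one column may drive another column negative, so a naive one-pass argument does not close. The careful version requires either a simultaneous linear-algebraic argument (showing the associated system of strict inequalities is feasible using Gordan/Farkas-type duality applied to the RREF structure), or an inductive scheme that orders the non-pivot columns and exploits the fact that the pivot part of each $\mathbf{w}_i$ is supported on a single column to ensure that corrections can be made without spoiling earlier columns. Combining the existence of such a $\beta$ with Lemma \ref{lem:strictDec} yields that $\np(I)$ is a compact face of $\NP(I)$ of dimension $\mu(I) - 1$, and then Theorem \ref{thm:cpctFace} gives $\ell(I) \ge \mu(I)$; the reverse inequality is automatic, completing the proof via Corollary \ref{cor:basicCriterion}.
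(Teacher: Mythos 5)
Your necessity argument is essentially the paper's: (1) from $\ell(I)\le\dim R$, (2) from Lemma \ref{lem:numHP}, and (3) from the observation that an all-negative vector in the row span forces all $\alpha_i<0$ via the pivot columns, which is incompatible with a column having no positive entry. The gap you flag in the sufficiency direction, however, is not just a missing computation --- it cannot be closed, because conditions (1)--(3) do not imply that the row span of the $\mathbf w_i$'s meets the open negative orthant. Concretely, take $I=(x_1^2x_2^2,\;x_1x_2^3x_3,\;x_1^3x_2x_4)\subseteq K[x_1,x_2,x_3,x_4]$. The three exponent vectors $(2,2,0,0)$, $(1,3,1,0)$, $(3,1,0,1)$ are affinely independent, so $\mu(I)=3\le 4=n$ and $\np(I)$ lies on exactly $s=2=n-\mu(I)+1$ hyperplanes, namely $u_1+u_3-u_4=2$ and $u_2-u_3+u_4=2$. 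The matrix with rows $\mathbf w_1=(1,0,1,-1)$ and $\mathbf w_2=(0,1,-1,1)$ is in reduced row echelon form and every column contains a positive entry, so (1)--(3) all hold. Yet $\alpha_1\mathbf w_1+\alpha_2\mathbf w_2=(\alpha_1,\alpha_2,\alpha_1-\alpha_2,\alpha_2-\alpha_1)$ can never have its last two entries simultaneously negative. By Lemma \ref{lem:strictDec} the triangle $\np(I)$ is therefore not a face of $\NP(I)$, and by Lemma \ref{lem:cpctFaceSum} every compact face of $\NP(I)$ is a face of $\np(I)$, so the maximal compact face has dimension at most $1$ and $\ell(I)\le 2<3=\mu(I)$: the ideal is not basic. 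When $s\ge 2$, the columnwise condition (3) must be replaced by the genuinely stronger condition that some linear combination $\sum_i\alpha_i\mathbf w_i$ has all negative entries (the two agree only when $s=1$).

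You should also know that the paper's own proof never establishes sufficiency either: after showing that basicness forces (3), it argues the contrapositive of that same implication a second time (failure of (3) forces $k\ge 1$ in Theorem \ref{thm:spreadCalc}, hence $\ell(I)<\mu(I)$), and at no point derives basicness from (1)--(3). So your instinct that the one-pass rescaling "does not close," and that a Gordan/Farkas-type feasibility argument is what would really be needed, is exactly right --- the correct conclusion is that the feasibility can fail under (3), so the theorem needs to be restated with the row-span condition rather than your proof completed.
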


\begin{proof} Recall that $\ell(I) \le \dim R$, so if $\mu(I) > \dim R$, $I$ is not basic.

Let $A$ be the $s \times n$ matrix with rows $\mathbf{w}_i$.  We may assume $A$ is in reduced row echelon form. Note $A$ cannot have a row of zeroes, for if so, one of the hyperplanes would be redundant. Consider a linear combination of the rows: $\mathbf W = \sum_{i=1}^s \alpha_i \mathbf w_i$.

If $I$ is basic, by Theorem \ref{thm:spreadCalc} and Lemma \ref{lem:numHP}, there exist $\alpha_i \in \RR$ so that $\mathbf W$ has all negative entries.  Since the first $s$ diagonal entries of $A$ are 1, this means that all $\alpha_i$ are negative.  If, say, $a_{j, s+1} \le 0$ for all $j \in \{1, \ldots, n\}$, then, since $\alpha_i < 0$ for all $i$, the $(s+1)$-th entry of $\mathbf W$ is greater than 0, a contradiction.  

Similarly, if we assume $a_{j, s+1} \le 0$ for all $j$, then no $\alpha_i$ exist so that $\mathbf W$ could have all negative entries. But since $s = n - \mu(I) + 1$, by Theorem \ref{thm:spreadCalc}, this means $\ell(I) \le n + 1 - (s + k) = n + 1 - (n - \mu(I) + 1 + k) = \mu(I) - k$ for some $k > 0$.  That is, $\ell(I) < \mu(I)$, so $I$ is not basic.
    
\end{proof}

In the specific case of $\mu(I) = n$, we have

\begin{cor}\label{cor:eqDim} Let $I = (\mathbf{x}^{\mathbf{b}_1}, \ldots, \mathbf{x}^{\mathbf{b}_n})$ be a monomial ideal in $K[x_1, \ldots, x_n]$, $K$ infinite residue field.  Assume $\mathbf{b}_1, \ldots, \mathbf{b}_n$ lie on a unique hyperplane, given by $\mathbf w \cdot \mathbf u = b$.  Then $I$ is basic if and only if every entry of $\mathbf w$ is negative.
    
\end{cor}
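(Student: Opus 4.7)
The plan is to deduce the corollary directly from Theorem \ref{thm:expectedCodim}. Since the hypotheses already pin down $s = 1$ (a unique defining hyperplane for $\np(I)$) and $\mu(I) \le n$ (at most $n$ generators), the three numbered conditions of that theorem should collapse into a single sign condition on $\mathbf{w}$, once one accounts for the ambiguity $\mathbf{w} \sim \lambda \mathbf{w}$.

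The first step is to show that when $\mathbf{w}$ has all entries of a single sign, the given generating set is automatically minimal, i.e.\ $\mu(I) = n$. If $\mathbf{x}^{\mathbf{b}_i}$ were redundant, some $\mathbf{x}^{\mathbf{b}_j}$ would divide it for $j \ne i$, giving $\mathbf{b}_i - \mathbf{b}_j \in \Z_{\ge 0}^n \setminus \{0\}$. But both vectors lie on $\mathbf{w} \cdot \mathbf{u} = b$, so $\mathbf{w}\cdot(\mathbf{b}_i - \mathbf{b}_j) = 0$, which is impossible: the dot product of a vector with entries all of one sign against a nonzero vector in $\Z_{\ge 0}^n$ is strictly nonzero. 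Hence each $\mathbf{x}^{\mathbf{b}_i}$ is a minimal generator, giving $\mu(I) = n$.

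For the forward implication, assume every entry of $\mathbf{w}$ is negative. Then $\mu(I) = n$ by the previous step, so condition (1) of Theorem \ref{thm:expectedCodim} holds and condition (2) reads $1 = n - n + 1$. The rref form of the $1 \times n$ matrix $[\mathbf{w}]$ is obtained by dividing $\mathbf{w}$ by its leading (negative) entry, producing a vector with all positive entries, so condition (3) holds for every $j$. Theorem \ref{thm:expectedCodim} then concludes that $I$ is basic. For the converse, assume $I$ is basic. Condition (2) forces $\mu(I) = n$ since $s = 1$, and condition (3) forces every entry of the rref vector $\mathbf{w}_1$ to be strictly positive. Because $\mathbf{w}_1$ is a scalar multiple of $\mathbf{w}$, all entries of $\mathbf{w}$ share a common sign, and the representative with all negative entries is the one asserted by the corollary.

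I do not anticipate a serious obstacle, as the argument is essentially a bookkeeping specialization of Theorem \ref{thm:expectedCodim}. The only mild subtlety is the convention surrounding $\mathbf{w}$, which is defined only up to a nonzero scalar; the phrase ``every entry of $\mathbf{w}$ is negative'' should be understood as the scalar-invariant statement that some representative of the hyperplane's normal has all entries of a common sign, with the negative sign chosen as convention.
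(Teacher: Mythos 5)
Your proof is correct, but it routes through Theorem \ref{thm:expectedCodim} where the paper argues directly from the convex-geometric lemmas. The paper's proof is two lines: if $I$ is basic then $\np(I)$ is a compact facet of $\NP(I)$ (Lemmas \ref{lem:numHP} and \ref{lem:npMax}), so by Lemma \ref{lem:strictDec} its defining functional $\mathbf w$ is strictly decreasing; conversely, if $\mathbf w$ is strictly decreasing then $\np(I)$ is an $(n-1)$-dimensional compact face of $\NP(I)$, so $\ell(I)=n$ by Theorem \ref{thm:cpctFace}, and then $\mu(I)=n$ comes for free from $n=\ell(I)\le\mu(I)\le n$. Your version instead treats the corollary as the $s=1$ specialization of Theorem \ref{thm:expectedCodim}, which forces you to establish $\mu(I)=n$ up front; your divisibility argument ($\mathbf w\cdot(\mathbf b_i-\mathbf b_j)=0$ is impossible for a nonzero nonnegative difference when $\mathbf w$ has entries of one sign) does this cleanly and is actually a point the paper elides, since the paper tacitly identifies $\np(I)$ with $\operatorname{conv}\{\mathbf b_1,\dots,\mathbf b_n\}$ without checking the listed generators are minimal. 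The trade-off is that your argument inherits whatever is true of Theorem \ref{thm:expectedCodim} (whose proof in the paper is itself the least self-contained part of that section), while the paper's direct argument leans only on Lemma \ref{lem:strictDec} and Bivi\`a-Ausina. Your explicit handling of the scalar ambiguity in $\mathbf w$ is appropriate; the statement is only scalar-invariant if read as ``all entries of $\mathbf w$ share a sign,'' and both your proof and the paper's adopt that reading implicitly.
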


\begin{proof} Assume that $I$ is basic.  Then $\np(I)$ is the unique compact facet of $\NP(I)$ of with maximal dimension.  In particular, since $\np(I)$ is itself a facet of $\NP(I)$, $\mathbf w$ has all negative coefficients by \ref{lem:strictDec}.

Conversely, if all entries of $\mathbf w$ are negative, then $\mathbf w$ is a strictly decreasing functional, so $\np(I)$ is a facet of $\NP(I)$ and thus $\ell(I) = n$.
\end{proof}

Thus, for a polynomial ring of three variables,

\begin{thm} Let $I$ be a monomial ideal in $K[x, y, z]$.  Then $I$ is basic if and only if $\mu(I) \le 2$ or $\mu(I) = 3$ and satisfies the conditions of Corollary \ref{cor:eqDim}.
\end{thm}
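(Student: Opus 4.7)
The plan is to split the argument by the value of $\mu(I)$ and invoke results already proved. Only the case $\mu(I) = 3$ carries any content; the other cases are one-line citations.

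First, since $\ell(I) \leq \dim K[x,y,z] = 3$, any ideal with $\mu(I) \geq 4$ satisfies $\mu(I) > \ell(I)$, so Corollary \ref{cor:basicCriterion} immediately gives that $I$ is not basic. At the other extreme, the case $\mu(I) \leq 2$ is handled directly by Theorem \ref{thm:trivBasic}, which yields that $I$ is basic. In both of these ranges the conclusion matches the statement.

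For $\mu(I) = 3$, I would further split on whether the three exponent vectors of the minimal generators of $I$ determine a unique affine hyperplane in $\mathbb{R}^3$. If they are collinear, then $\np(I)$ is a segment lying on at least two independent affine hyperplanes, so the minimal number $s$ of defining hyperplanes of $\np(I)$ satisfies $s \geq 2$. Lemma \ref{lem:numHP} requires $s = n - \mu(I) + 1 = 1$ for a basic ideal, so the collinear situation cannot produce a basic ideal, and it also violates the setup of Corollary \ref{cor:eqDim}, consistent with the stated conclusion. If instead the three exponent vectors determine a unique hyperplane $\mathbf{w} \cdot \mathbf{u} = b$, then the hypotheses of Corollary \ref{cor:eqDim} apply verbatim, and $I$ is basic if and only if every entry of $\mathbf{w}$ is negative, which is exactly ``satisfies the conditions of Corollary \ref{cor:eqDim}''.

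The main obstacle is essentially bookkeeping: the argument is a consolidation of previous results. The one point that requires care is the degenerate collinear sub-case of $\mu(I) = 3$, where $\np(I)$ is not full-dimensional in its ambient affine span and Corollary \ref{cor:eqDim} does not apply; Lemma \ref{lem:numHP} disposes of this in a single line.
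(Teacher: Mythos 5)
Your proposal is correct and follows essentially the same route as the paper, which presents this theorem as an immediate consequence of Theorem \ref{thm:trivBasic}, the bound $\ell(I)\le\dim R=3$, and Corollary \ref{cor:eqDim}. Your only addition is to make explicit the degenerate sub-case of $\mu(I)=3$ with collinear exponent vectors, which the paper leaves implicit; your disposal of it via Lemma \ref{lem:numHP} is correct and a worthwhile clarification rather than a different argument.
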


\section{On the Equality of Theorem \ref{thm:spreadCalc}}

As stated in Remark \ref{rem:nonEqual}, it is conjectured that Theorem \ref{thm:spreadCalc} is an equality, rather than an inequality.  This section will give several examples in which we show equality.

\subsection{Three-dimensional Case} In the case where $I$ is a monomial ideal in $K[x,y,z]$, Theorem \ref{thm:spreadCalc} allows us to calculate $\ell(I)$ exactly.

 Let $\mathfrak{O}$ denote the interior of the negative orthant of $\mathbb{R}^n$.  For a polyhedron $P$, let $\partial(P)$ denote the boundary of $P$ (the union of all proper faces of $P$).

\begin{lem}\label{mixedEntries}
    Let $P=\np(I)$ be a Newton polytope, and let $N=N(P)$ be its normal fan.  Let $H$ be a defining halfspace of some cone in $N$ and suppose that $H$ is defined by $\mathbf{h}\cdot\mathbf{x}\leq 0$.  Then $\mathbf{h}$ has both positive and negative entries.
\end{lem}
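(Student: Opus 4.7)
The plan is to show that, up to a positive scalar, the normal $\mathbf{h}$ of any defining halfspace of any cone of $N(P)$ can be taken to be a difference $\mathbf{b}_1 - \mathbf{b}_2$ of two distinct vertices of $P = \np(I)$, and then to invoke the fact that distinct vertices of $\np(I)$ correspond to incomparable minimal monomial generators of $I$.

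First I would fix a cone $C = N_P(F)$ for some face $F$ of $P$. Unwinding the definition of the normal cone, $\mathbf{v}$ lies in $C$ if and only if every vertex of $F$ achieves the maximum of the linear functional $\mathbf{v} \cdot \mathbf{x}$ over $P$, which is equivalent to $(\mathbf{b} - \mathbf{b}') \cdot \mathbf{v} \leq 0$ for every vertex $\mathbf{b}$ of $P$ and every vertex $\mathbf{b}'$ of $F$. This furnishes an H-representation of $C$ whose defining inequalities have normals of the form $\mathbf{b} - \mathbf{b}'$. In a minimal such representation, each facet of $C$ is cut out by a positive scalar multiple of one of these $\mathbf{b} - \mathbf{b}'$, so I may write $\mathbf{h} = c(\mathbf{b}_1 - \mathbf{b}_2)$ for some $c > 0$ and two distinct vertices $\mathbf{b}_1, \mathbf{b}_2$ of $P$.

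Next I would use that the vertices of $P = \np(I)$ are exponent vectors of minimal generators of $I$, so $\mathbf{x}^{\mathbf{b}_1}$ and $\mathbf{x}^{\mathbf{b}_2}$ are two distinct minimal monomial generators. Because neither divides the other, $\mathbf{b}_1 - \mathbf{b}_2$ has at least one strictly positive coordinate (where $\mathbf{b}_1$ exceeds $\mathbf{b}_2$) and at least one strictly negative coordinate (where $\mathbf{b}_2$ exceeds $\mathbf{b}_1$). Scaling by $c > 0$ preserves this property, so $\mathbf{h}$ has both positive and negative entries.

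The hard part will be justifying the reduction to a vertex difference when $C$ is not full-dimensional in $\mathbb{R}^n$, which occurs whenever $F$ has positive dimension. In that situation, the facet-defining halfspace of $C$ is determined only up to adding vectors in the orthogonal complement of the span of $C$, so $\mathbf{h}$ need not literally equal a vertex difference. The H-representation built from the normal-cone definition provides a canonical choice that is a vertex difference, but one must argue carefully why any other representative can be replaced by this canonical one (equivalently, why the inward-pointing components in the lineality-style directions cannot overturn the sign structure imposed by minimality of the generators).
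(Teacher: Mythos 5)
Your proof is correct and follows essentially the same route as the paper: the paper likewise observes that the defining halfspaces of a normal cone have normals of the form $\mathbf{u}-\mathbf{v}$ for vertices $\mathbf{u},\mathbf{v}$ of $P$, and concludes from the incomparability of minimal monomial generators that such a difference has mixed signs. The subtlety you flag about non-full-dimensional cones is not addressed in the paper's (two-sentence) proof either; your version is, if anything, more careful in making the reduction to vertex differences explicit.
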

\begin{proof}
    The halfspaces of a cone in $N$ are given by inequalities of the form $(\mathbf{u}-\mathbf{v})\cdot\mathbf{x}\leq 0$, where $\mathbf{u}$ and $\mathbf{v}$ are vertices of $P$.  Since $P$ is a Newton polytope, no two vertices $\mathbf{u}$ and $\mathbf{v}$ can satisfy $(\mathbf{u}-\mathbf{v})_i\geq 0$ for all $i$.
\end{proof}

\begin{lem}\label{negOrth}
    Let $I$ be a non-principal ideal, $P=\np(I)$ be a newton polytope, and let $N=N(P)$ be its normal fan.  Suppose that $C$ is a maximal cone in $N$, satisfying $C\cap \mathfrak{O}\neq\emptyset$.  Then $\partial(C)\cap \mathfrak{O}\neq\emptyset$.
\end{lem}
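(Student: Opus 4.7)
The plan is to rule out the one obstruction that would make $\partial(C) \cap \mathfrak{O}$ empty, namely $\mathfrak{O}$ being swallowed inside the relative interior of $C$, and then to extract a boundary point by a convexity argument on a line segment. The sign constraint from Lemma \ref{mixedEntries} is what prevents $C$ from swallowing $\mathfrak{O}$, and connectedness (really, convexity) of $\mathfrak{O}$ handles the rest.

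I would begin by recording two setup facts. First, since $C$ is a maximal cone of the complete fan $N = N(P)$, it is full-dimensional in $\mathbb{R}^n$; consequently the union of its proper faces, which is how the paper defines $\partial(C)$, coincides with its topological boundary. Second, because $C$ is a polyhedral cone with apex at the origin, its defining halfspaces can be written in the form $\mathbf{h}_i \cdot \mathbf{x} \le 0$ (the right-hand side is $0$ since every facet of $C$ passes through $\mathbf{0}$), and Lemma \ref{mixedEntries} tells us that each $\mathbf{h}_i$ has at least one strictly positive and one strictly negative entry.

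Next I would show that $\mathfrak{O} \not\subseteq C$. Suppose otherwise. Because $C$ is closed, it would contain the closure $\overline{\mathfrak{O}}$, namely the closed negative orthant, and in particular each coordinate vector $-\mathbf{e}_j$ would lie in $C$. Evaluating the defining inequalities gives $\mathbf{h}_i \cdot (-\mathbf{e}_j) = -h_{i,j} \le 0$ for every $i$ and $j$, forcing every $\mathbf{h}_i$ to have only nonnegative entries. This contradicts Lemma \ref{mixedEntries}, so there must exist some $\mathbf{b} \in \mathfrak{O} \setminus C$.

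To finish, I would fix any $\mathbf{a} \in C \cap \mathfrak{O}$ and consider the segment $\mathbf{p}(t) = (1-t)\mathbf{a} + t\mathbf{b}$, which remains inside the convex set $\mathfrak{O}$. Setting $t^{\ast} := \sup\{\, t \in [0,1] : \mathbf{p}(t) \in C \,\}$, closedness of $C$ yields $\mathbf{p}(t^{\ast}) \in C$, while the existence of values $t > t^{\ast}$ arbitrarily close with $\mathbf{p}(t) \notin C$ places $\mathbf{p}(t^{\ast})$ on the topological boundary of $C$, hence in $\partial(C)$ by the identification from the first step. Since the entire segment lies in $\mathfrak{O}$, this produces a point of $\partial(C) \cap \mathfrak{O}$, as required. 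The main subtlety is really this last identification of the two notions of boundary; it is harmless because $C$ is full-dimensional, but it is worth acknowledging so that the logical chain is airtight.
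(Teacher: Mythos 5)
Your proof is correct and follows essentially the same strategy as the paper: use Lemma \ref{mixedEntries} to produce a point of $\mathfrak{O}$ outside $C$, then connect it to a point of $C\cap\mathfrak{O}$ by a segment lying in the convex set $\mathfrak{O}$ and locate a boundary point on it. The only (cosmetic) difference is that the paper exhibits an explicit point $\mathbf{y}\in\mathfrak{O}\setminus C$ violating one defining inequality, whereas you argue by contradiction via the closed negative orthant and the vectors $-\mathbf{e}_j$; your added remark identifying $\partial(C)$ with the topological boundary for the full-dimensional cone $C$ is a welcome precision.
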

\begin{proof}
    Let $H$ be a defining halfspace of $C$.  Then by Lemma \ref{mixedEntries}, we know that $H$ is defined by $\mathbf{h}\cdot\mathbf{x}\leq 0$ for some $\mathbf{h}$ which includes positive and negative entries.  Without loss of generality, we may assume that $\mathbf{h}$ has the form $$\mathbf{h}=(a_1,\dots,a_s,-b_1,\dots,-b_t,0,\dots,0),$$ where $a_1,\dots,a_s,b_1,\dots,b_t>0$.  Let $$\mathbf{y}=(-\frac{1}{sa_1},\dots,-\frac{1}{sa_s},-\frac{10}{b_1},\dots,-\frac{10}{b_t},-1,\dots,-1)\in \mathfrak{O}.$$  Then $\mathbf{h}\cdot\mathbf{y}=10t-1\geq 9$.  So $\mathbf{y}\in \mathfrak{O}\setminus C$.
    Let $\mathbf{z}\in C\cap \mathfrak{O}$. Then, since $C$ is full-dimensional, the line segment between $\mathbf{z}$ and $\mathbf{y}$ intersects $\partial(C)$.  Since $\mathfrak{O}$ is convex, this line segment is contained in $\mathfrak{O}$, so $\partial(C)\cap \mathfrak{O}\neq\emptyset$. 
\end{proof}
As a result of these lemmas, we can compute the analytic spread of monomial ideals in three-dimensional polynomial rings.

\begin{prop}
    Let $R$ be a three-dimensional polynomial ring and $I\subseteq R$ be a monomial ideal.  Then the inequality from Theorem \ref{thm:spreadCalc} is an equality.
\end{prop}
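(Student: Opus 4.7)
The plan is to translate both $\ell(I)$ and the quantity $k$ from Theorem \ref{thm:spreadCalc} into statements about the normal fan $\mathcal{N}(\np(I))$, and then show they match when $n = 3$. First I would let $C^*$ denote a minimum-dimensional cone of $\mathcal{N}(\np(I))$ meeting the open negative orthant $\mathfrak{O}$; since the fan is complete, such a cone exists. Writing $\dim C^* = s + l^*$ where $s$ is the lineality dimension, Lemmas \ref{lem:cpctFaceSum} and \ref{lem:strictDec} together with Theorem \ref{thm:cpctFace} give
\[
    \ell(I) = n + 1 - (s + l^*).
\]
The ray generators of $C^*$ exhibit a choice of $l^*$ halfspace normals whose positive combination (after adjustment by lineality terms) lies in $\mathfrak{O}$, so $k \le l^*$ is automatic. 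The proposition therefore reduces to showing $k \ge l^*$ when $n = 3$.

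To establish this, I would argue by case analysis on $s$. The cases $s = 3$ (where $I$ is principal) and $s = 2$ (where $\np(I)$ is a line segment) both give $l^* = k = 0$: for $s = 2$, the direction vector of the segment has entries of mixed sign by Lemma \ref{mixedEntries}, and an explicit all-negative vector can be placed in its orthogonal complement (which equals the lineality) via a construction along the lines of the one appearing in the proof of Lemma \ref{negOrth}. In the case $s = 1$, either $\pm \mathbf{w}_1$ already lies in $\mathfrak{O}$, giving $l^* = 0 = k$, or applying Lemma \ref{negOrth} to any maximal cone meeting $\mathfrak{O}$ produces a two-dimensional cone of $\mathcal{N}(\np(I))$ meeting $\mathfrak{O}$, so $l^* = 1$; the matching bound $k \ge 1$ follows because $k = 0$ would force $\pm \mathbf{w}_1 \in \mathfrak{O}$.

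The pivotal case is $s = 0$, when $\np(I)$ is three-dimensional. Here Lemma \ref{mixedEntries} guarantees that no individual $\mathbf{h}_j$ lies in $\mathfrak{O}$, so $k \ge 2$. On the other hand, Lemma \ref{negOrth} applied to a maximal cone meeting $\mathfrak{O}$ yields a two-dimensional cone of $\mathcal{N}(\np(I))$ meeting $\mathfrak{O}$, so $l^* \le 2$. Combined with $k \le l^*$, this forces $k = l^* = 2$.

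The main obstacle is conceptual rather than technical: one must carefully distinguish between $k$, which counts positive coefficients among the $\mathbf{h}_j$'s without requiring these halfspace normals to be ray generators of a single cone of $\mathcal{N}(\np(I))$, and $l^*$, which demands exactly this coherence. Remark \ref{rem:nonEqual} shows the two invariants can diverge in higher dimensions; the three-dimensional hypothesis saves us because Lemma \ref{negOrth} forces $l^* \le 2$ while Lemma \ref{mixedEntries} supplies a matching lower bound on $k$ in each of the subcases above.
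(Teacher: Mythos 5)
Your reduction of the proposition to the equality $k = l^*$, and the use of Lemma \ref{negOrth} to cap $l^*$, is sound and close in spirit to the paper's own proof (which also runs through the normal fan, Lemma \ref{negOrth}, and a case analysis, organized by the value of $s+k$ rather than by $s$). The cases $s=3,2,1$ are fine. The pivotal case $s=0$, however, contains a genuine error. Lemma \ref{mixedEntries} concerns the defining halfspaces of the \emph{cones of the normal fan}, whose normals are differences $\mathbf{u}-\mathbf{v}$ of vertices of $\np(I)$, i.e.\ edge directions of the polytope; it says nothing about the facet normals $\mathbf{h}_j$ of $\np(I)$ itself, which are the \emph{ray generators} of the normal fan --- the dual objects. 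And the conclusion you draw is simply false: a full-dimensional Newton polytope can have an all-negative facet normal. For $I=(x^2,y^2,z^2,xyz)$, the Newton polytope is the tetrahedron on $(2,0,0),(0,2,0),(0,0,2),(1,1,1)$; the facet through the first three vertices is cut out by $-u-v-w\le -2$, so $\mathbf{h}=(-1,-1,-1)\in\mathfrak{O}$ and $k=1$. As written, your argument would force $k=l^*=2$ and hence $\ell(I)=2$, whereas that facet is a compact $2$-dimensional face of $\NP(I)$ and $\ell(I)=3$.

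The repair is routine: split $s=0$ into two subcases. If some $\mathbf{h}_j$ lies in $\mathfrak{O}$, then $k=1$ (it cannot be $0$, since the empty combination is $\mathbf{0}$) and $l^*=1$ (the ray generated by $\mathbf{h}_j$ is a one-dimensional cone of the fan meeting $\mathfrak{O}$, while the zero cone does not meet $\mathfrak{O}$). If no $\mathbf{h}_j$ lies in $\mathfrak{O}$, then indeed $k\ge 2$, and Lemma \ref{negOrth} gives $l^*\le 2$ exactly as you argue, so $k=l^*=2$. With that correction the proof goes through.
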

\begin{proof}

Let $P=\np(I)$.  Let $s$ and $k$ be defined as in Theorem \ref{thm:spreadCalc}.

If $I$ is a principal ideal, then $P$ is a point in $\mathbb{R}^3$ and is thus defined by the intersection of three hyperplanes, so $n+1-(s+k)=1$.  Since $\ell(I)=1$ for all principal ideals, the proposed equality holds.

Suppose $I$ is non-principal.  Then $\ell(I)\geq 2$.  If $s+k=1$, then $P$ has some halfspace (or hyperplane) of $P$ defined by $\mathbf{h}\cdot\mathbf{x}\leq c$ ($\mathbf{h}\cdot\mathbf{x}= c$, respectively) for some $c\in\mathbb{R}$ such that all entries of $\mathbf{h}$ are negative.  The face of $P$ defined by $P\cap\{\mathbf{x}|\mathbf{h}\cdot\mathbf{x}=c\}$ is a 2-dimensional face defined by a strictly decreasing linear function, and thus by Lemma \ref{lem:strictDec} is also a compact face of $\NP(I)$.

Suppose $s+k=2$.  Since $\mathcal{N}(P)$ is a complete fan, it contains a three-dimensional cone, $C$, which intersects $\mathfrak{O}$.  By Lemma \ref{negOrth}, we know that $\partial(C)$ intersects $\mathfrak{O}$, so there is a 2-dimensional face, $C'$ of $C$ such that $C'\cap\partial(C)\neq\emptyset$.  If $P$ is full-dimensional, then $C'=\mbox{cone}(\mathbf{h}_1,\mathbf{h}_2)$ for some $\mathbf{h}_1,\mathbf{h}_2$.  If $P$ is codimension $1$, then $C'=\mbox{cone}(\mathbf{h}_1)+\mbox{span}(\mathbf{h}_2)$ for some $\mathbf{h}_1,\mathbf{h}_2$.  If $P$ is codimension 2 then  for some $\mathbf{h}_1,\mathbf{h}_2$.  Since $C'$ intersects $\mathfrak{O}$, there exist $\alpha_1,\alpha_2$, such that $\alpha_1\mathbf{h}_1+\alpha_2\mathbf{h}_2\in\mathfrak{O}$.  If $P$ is full dimensional, then both $\alpha_i$ are positive. 
 If $P$ is codimension-1, then $\alpha_1$ is positive.  Thus there is a 1-dimensional face of $P$ defined by $P\cap\{\mathbf{x}|\mathbf{h}_1\cdot\mathbf{x}=c_1\}\cap\{\mathbf{x}|\mathbf{h}_2\cdot\mathbf{x}=c_2\}$, which is by Lemma \ref{lem:strictDec} also a compact face of $\NP(I)$.
\end{proof}

\subsection{Ideals with Disjointly Generated Primary Decomposition}

Call two monomial ideals $I$ and $J$ \textit{disjointly generated} if the set of variables in the minimal generation of $I$ and the set of variables in the minimal generation of $J$ are disjoint.  Note that if $I$ and $J$ are disjointly generated, then $I \cap J = IJ$.  The analytic spread of these ideals were studied in \cite{HV}, and analytic spread was precisely calculated to be $\ell(IJ) = \ell(I) + \ell(J) - 1$.  We will show that Theorem \ref{thm:spreadCalc} attains equality for ideals with disjointly generated primary intersection.



First, we establish how embedding monomial ideals into larger polynomial rings affects Newton polytopes and spread.

\begin{lem}\label{lem:embedding} Let $I$ be a monomial ideal in $K[x_1, \ldots, x_n]$, and let $\np(I)$ have hyperplanes given by $$\begin{array}{c}\mathbf{w}_1\cdot\mathbf{u}=b_1\\
    \mathbf{w}_2\cdot\mathbf{u}=b_2\\
    \vdots\\
    \mathbf{w}_s\cdot\mathbf{u}=b_s\end{array}$$
    and (facet-defining) halfspaces given by
    $$
    \begin{array}{c}
    \mathbf{h}_1\cdot\mathbf{u}\leq c_1\\
    \mathbf{h}_2\cdot\mathbf{u}\leq c_2\\
    \vdots\\
    \mathbf{h}_t\cdot\mathbf{u}\leq c_t
    \end{array}.
    $$

Let $S = K[x_1, \ldots, x_n, y_1, \ldots, y_m]$.  Let $\mathbf z$ be the exponent vector of coordinates for a monomial in $S$, and let $v_i$ denote the coordinate corresponding to the exponent of $y_i$.  Then $\np(IS)$ has hyperplanes given by
$$\begin{array}{c}\mathbf{w}_1'\cdot\mathbf{z}=b_1\\
    \mathbf{w}_2'\cdot\mathbf{z}=b_2\\
    \vdots\\
    \mathbf{w}_s'\cdot\mathbf{z}=b_s\\
    v_1 = 0\\
    \vdots\\
    v_m = 0\\
    \end{array}$$
    where $\mathbf{w}_i'$ is the vector with first $n$ entries equal to $\mathbf{w}_i$ and remaining entries equal to zero.  Furthermore, $\np(IS)$ has (facet-defining) halfspaces given by
    $$
    \begin{array}{c}
    \mathbf{h}_1'\cdot\mathbf{z}\leq c_1\\
    \mathbf{h}_2'\cdot\mathbf{z}\leq c_2\\
    \vdots\\
    \mathbf{h}_t'\cdot\mathbf{z}\leq c_t
    \end{array},
    $$
    where $\mathbf{h}_i'$ is the vector with first $n$ entries equal to $\mathbf{h}_i$, and remaining entries equal to zero.
    
\end{lem}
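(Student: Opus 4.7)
The plan is to observe that $IS$ has the same minimal monomial generating set as $I$, since adjoining the variables $y_1,\dots,y_m$ neither introduces new minimal generators nor makes any existing one redundant. Consequently the exponent vectors of the generators of $IS$ in $\mathbb{Z}_{\ge 0}^{n+m}$ are exactly the exponent vectors of the generators of $I$ with $m$ zeros appended, so $\np(IS)$ is the image of $\np(I)$ under the inclusion $\iota:\mathbb{R}^n\hookrightarrow \mathbb{R}^{n+m}$ sending $(u_1,\dots,u_n)\mapsto(u_1,\dots,u_n,0,\dots,0)$.

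From this identification the hyperplane statements are essentially immediate. First, $\np(IS)$ is contained in each coordinate hyperplane $v_j=0$, accounting for the $m$ new hyperplanes. Second, on the affine subspace $\{v_1=\cdots=v_m=0\}$ the equation $\mathbf{w}_i'\cdot\mathbf{z}=b_i$ coincides with $\mathbf{w}_i\cdot\mathbf{u}=b_i$, so $\np(I)\subseteq\{\mathbf{w}_i\cdot\mathbf{u}=b_i\}$ forces $\np(IS)\subseteq\{\mathbf{w}_i'\cdot\mathbf{z}=b_i\}$. Minimality of the full list of $s+m$ hyperplanes is inherited from minimality of the original $\mathbf{w}_i$'s together with the fact that the linear functionals $v_1,\dots,v_m$ are independent of each other and of the $\mathbf{w}_i'$'s (whose last $m$ entries vanish).

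For the halfspaces I would use that $\iota$ is an affine isomorphism onto its image and therefore sends facets of $\np(I)$ (codimension-one faces of the affine hull of $\np(I)$) bijectively to facets of $\np(IS)$. If $\mathbf{h}_i\cdot\mathbf{u}\le c_i$ cuts out a facet $F_i$ of $\np(I)$, then the lifted inequality $\mathbf{h}_i'\cdot\mathbf{z}\le c_i$ reduces to $\mathbf{h}_i\cdot\mathbf{u}\le c_i$ on the slice $\{v_1=\cdots=v_m=0\}$, hence is satisfied on $\iota(\np(I))=\np(IS)$ with equality exactly on $\iota(F_i)$. Thus each $\mathbf{h}_i'\cdot\mathbf{z}\le c_i$ is facet-defining for $\np(IS)$, and no facet of $\np(IS)$ is missed because the correspondence of faces under $\iota$ is a bijection.

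The only subtle point is the bookkeeping around the phrase \emph{facet-defining} in a non-full-dimensional ambient space: facets are understood relative to the affine hull, so the new coordinate equalities $v_j=0$ must be listed among the hyperplanes (they hold with equality on all of $\np(IS)$) rather than among the facet-defining halfspaces. Once this distinction is registered, no substantive calculation remains.
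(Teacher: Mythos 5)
Your proposal is correct and follows essentially the same route as the paper: identify $\np(IS)$ with the coordinate embedding of $\np(I)$, note that the generators all lie on $v_j=0$ and on the lifted hyperplanes, and observe that the facet-defining halfspaces lift because the embedding preserves the face structure. Your write-up is in fact somewhat more careful than the paper's (which asserts minimality of the hyperplane list and preservation of facets without the linear-independence and affine-isomorphism justifications you supply), but the underlying argument is the same.
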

\begin{proof}  Every generator of $I$ when included in $K[x_1, \ldots, x_n, y_1, \ldots, y_m]$ has corresponding exponent vectors lying on $v_j = 0$ for $j = 1, ..., m$.  Thus it is evident that $\np(I)$ lies on the hyperplanes, $v_j = 0$.  All generators also clearly satisfy $\mathbf{w}_i \cdot \mathbf z = b_i$, so these are in the defining hyperplanes as well.  These are all possible hyperplanes, as their intersection gives a polytope of the same dimension as the original $\np(I)$.

Since $\mathbf{h}_i \cdot \mathbf u \le c_i$, it is evident that $\mathbf{h}_i' \cdot \mathbf z \le c_i$.  As the halfspaces are facet-defining for $\np(I)$, and $\np(IS)$ is an embedding of $\np(I)$ into a larger space, the number of facets should be retained.  So no extra halfspaces are necessary to define $\np(IS)$.
    
\end{proof}

As a result of this lemma, it follows that extending a monomial ideal to a ring with more variables does not change its analytic spread.  In particular, we conclude the following:

\begin{thm}\label{thm:embed} Let $I$ be a monomial ideal in $K[x_1, \ldots, x_n]$.  If $S = K[x_1, \ldots x_n, y_1, \ldots, y_m]$, then $\ell(IS) = \ell(I)$.
\end{thm}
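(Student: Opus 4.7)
The plan is to apply Bivi\`{a}-Ausina's Theorem \ref{thm:cpctFace} directly, comparing the maximum-dimensional compact faces of $\NP(I) \subseteq \RR^n$ and $\NP(IS) \subseteq \RR^{n+m}$. The key geometric point is that $\np(IS)$ is simply the embedded image of $\np(I)$ in the $n$-dimensional coordinate subspace $L = \{v_1 = \cdots = v_m = 0\}$ of $\RR^{n+m}$; in particular, the two Newton polytopes are affinely isomorphic. By Lemmas \ref{lem:cpctFaceSum} and \ref{lem:strictDec}, the compact faces of $\NP(I)$ (respectively $\NP(IS)$) are precisely the faces of $\np(I)$ (respectively $\np(IS)$) cut out by linear functionals with all negative coefficients. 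It therefore suffices to show that the maximum dimension among such faces agrees for the two polytopes.

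To establish $\ell(IS) \ge \ell(I)$, I would take a compact face $F$ of $\NP(I)$ of maximum dimension $\ell(I)-1$, cut out from $\np(I)$ by some $\mathbf w \in \RR^n$ with every coordinate negative. Appending $-1$'s yields $\tilde{\mathbf w} = (\mathbf w, -1, \ldots, -1) \in \RR^{n+m}$, again with all negative coordinates. Since $\np(IS) \subseteq L$, the last $m$ coordinates of any $\mathbf z \in \np(IS)$ vanish, so $\tilde{\mathbf w}\cdot \mathbf z = \mathbf w \cdot \mathbf u$ whenever $\mathbf z = (\mathbf u, \mathbf 0)$. Thus $\tilde{\mathbf w}$ cuts out exactly the embedded image of $F$ on $\np(IS)$, which is a compact face of $\NP(IS)$ of the same dimension $\ell(I)-1$.

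The reverse inequality $\ell(IS) \le \ell(I)$ is the dual move: take a compact face $F'$ of $\NP(IS)$ of dimension $\ell(IS)-1$, cut out by some $\tilde{\mathbf w} = (\mathbf w, \mathbf v) \in \RR^{n+m}$ with all negative entries. Once more using $\np(IS) \subseteq L$, the value $\tilde{\mathbf w}\cdot \mathbf z$ depends only on $\mathbf w$, so $F'$ corresponds under the affine isomorphism to the face of $\np(I)$ cut out by $\mathbf w$; the entries of $\mathbf w$ are already all negative, so by Lemma \ref{lem:strictDec} this is a compact face of $\NP(I)$ of dimension $\ell(IS)-1$. The argument really reduces to the single observation that, since $\np(IS)$ is confined to $L$, the last $m$ coordinates of a functional on $\RR^{n+m}$ are inert on $\np(IS)$, so negative functionals on $\RR^n$ and $\RR^{n+m}$ can be freely interchanged without affecting which face they cut out. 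There is no serious obstacle; the theorem is essentially the geometric shadow of the trivial fact that adjoining polynomial variables does not alter the Newton polytope up to affine isomorphism.
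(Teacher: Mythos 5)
Your argument is correct, and it reaches the conclusion by a somewhat different route than the paper. The paper treats this theorem as an immediate consequence of Lemma \ref{lem:embedding}, which computes the hyperplane/halfspace ($H$-)description of $\np(IS)$: the hyperplanes of $\np(I)$ are augmented by $v_1=0,\dots,v_m=0$ and the facet-defining halfspaces are unchanged, so both $n$ and $s$ increase by $m$ and the quantity $n+1-(s+k)$ in Theorem \ref{thm:spreadCalc} is preserved. You instead bypass the $H$-description entirely and work with the vertex description: $\np(IS)$ is the affinely isomorphic embedded copy of $\np(I)$ in the subspace $L$, and by Lemmas \ref{lem:cpctFaceSum} and \ref{lem:strictDec} the compact faces of either Newton polyhedron are exactly the faces of the corresponding polytope cut out by all-negative functionals, which can be transported back and forth by appending or deleting the (inert) last $m$ coordinates. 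Your version buys something real: Theorem \ref{thm:spreadCalc} only supplies an upper bound on $\ell$, so the paper's route, read literally, gives $\ell(IS)$ and $\ell(I)$ the \emph{same upper bound} rather than proving they are equal, whereas your two-sided face correspondence via Theorem \ref{thm:cpctFace} genuinely establishes the equality. The one point worth stating explicitly is the forward use of Lemma \ref{lem:strictDec}: a maximal compact face of $\NP(I)$ is a face of $\np(I)$ (Lemma \ref{lem:cpctFaceSum}) and \emph{some} functional cutting it out of $\NP(I)$ has all negative coefficients, and it is that functional you extend; you use this correctly, but it deserves a sentence.
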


Now, we turn our attention ideals that are intersections of disjointly generated ideals.  In order to apply Theorem \ref{thm:spreadCalc}, we must be able to determine the halfspaces and hyperplanes of the Newton polytope of the intersection of disjointly generated monomial ideals.

\begin{lem}\label{lem:disGen} Let $I$ and $J$ be two disjointly generated monomial ideals.  Then the hyperplanes (and facet-defining halfspaces) of $\np(I \cap J)$ is the union of the hyperplanes (and facet-defining halfspaces) of $\np(I)$ and $\np(J)$.  
    
\end{lem}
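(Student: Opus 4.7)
The plan is to identify $\np(I \cap J)$ with the Cartesian product $\np(I) \times \np(J)$ (each factor taken in its own set of coordinates) and then read off its hyperplane and halfspace description from those of the factors. Since $I$ and $J$ are disjointly generated, after relabeling variables we may assume $I \subseteq K[x_1, \ldots, x_n]$ and $J \subseteq K[y_1, \ldots, y_m]$, viewed as ideals in $R = K[x_1, \ldots, x_n, y_1, \ldots, y_m]$. In this setting $I \cap J = IJ$, and the minimal generators of $IJ$ are precisely the products $\mathbf{x}^{\mathbf{a}}\mathbf{y}^{\mathbf{b}}$ with $\mathbf{x}^{\mathbf{a}} \in G(I)$ and $\mathbf{y}^{\mathbf{b}} \in G(J)$; the exponent vector of such a product is the concatenation $(\mathbf{a}, \mathbf{b}) \in \mathbb{R}^{n+m}$.

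The next step is to show $\np(IJ) = \np(I) \times \np(J)$. The containment $\subseteq$ is immediate from the description of $G(IJ)$. For the reverse, given convex combinations $\mathbf{u} = \sum_i \lambda_i \mathbf{a}_i \in \np(I)$ and $\mathbf{v} = \sum_j \mu_j \mathbf{b}_j \in \np(J)$, the identity
\[
    (\mathbf{u}, \mathbf{v}) = \sum_{i,j} \lambda_i \mu_j (\mathbf{a}_i, \mathbf{b}_j)
\]
expresses $(\mathbf{u}, \mathbf{v})$ as a convex combination of exponent vectors of generators of $IJ$, because the coefficients $\lambda_i \mu_j$ are nonnegative and $\sum_{i,j} \lambda_i \mu_j = \bigl(\sum_i \lambda_i\bigr)\bigl(\sum_j \mu_j\bigr) = 1$.

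Finally, I would invoke the standard polyhedral fact: for polytopes $P \subseteq \mathbb{R}^n$ and $Q \subseteq \mathbb{R}^m$, the hyperplanes and facet-defining halfspaces of $P \times Q \subseteq \mathbb{R}^{n+m}$ are obtained by taking those of $P$ padded with zeros in the $Q$-coordinates, together with those of $Q$ padded with zeros in the $P$-coordinates. This is because the affine hull of $P \times Q$ is the product of the two affine hulls, and every facet of $P \times Q$ has the form $F \times Q$ or $P \times F$, where $F$ is a facet of the corresponding factor. Applying this with $P = \np(I)$ and $Q = \np(J)$ yields the lemma. The main obstacle is pinning down the identification $\np(IJ) = \np(I) \times \np(J)$; the remainder is routine polyhedral geometry. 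One minor point to watch is that in light of Lemma \ref{lem:embedding}, the polytope $\np(I)$ viewed as sitting in $R$ would a priori carry the extra coordinate hyperplanes $v_j = 0$ that are not hyperplanes of $\np(IJ)$, so the statement is to be read with $\np(I)$ and $\np(J)$ in their respective minimal variable sets and their hyperplanes and halfspaces then lifted by padding with zeros.
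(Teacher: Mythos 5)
Your proof is correct, and it packages the argument differently from the paper. The paper works directly with the generators: it verifies that every generator of $I\cap J = IJ$ satisfies the (zero-padded) hyperplane equations of $\np(I)$ and $\np(J)$, and conversely decomposes an arbitrary hyperplane $\mathbf p\cdot\mathbf z = e$ of $\np(I\cap J)$ as $\mathbf p_x\cdot\mathbf u_x + \mathbf p_y\cdot\mathbf u_y = e$ and evaluates on the generators $\alpha_1\beta_j$ to see that each piece is constant on the corresponding factor; the halfspace case is then dispatched with ``an analogous argument.'' You instead prove the structural identity $\np(I\cap J) = \np(I)\times\np(J)$ (your convex-combination computation with coefficients $\lambda_i\mu_j$ is exactly right, and the reduction to $G(IJ) = G(I)\cdot G(J)$ uses disjointness of the variable sets in the same way the paper does) and then quote the standard description of the affine hull and facets of a product of polytopes. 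What your route buys is a genuinely cleaner treatment of the facet-defining halfspaces: the fact that every facet of $P\times Q$ is $F\times Q$ or $P\times F$ is precisely what makes the halfspace half of the claim work, and the paper's ``analogous argument'' is less transparent there, since the constancy trick used for hyperplanes does not transfer verbatim to inequalities. What the paper's route buys is self-containment --- no external polyhedral facts are cited. Your closing remark about the extra coordinate hyperplanes $v_j = 0$ from Lemma \ref{lem:embedding} is a worthwhile observation about how the statement must be parsed, and the paper handles the same issue only implicitly by padding the $\mathbf w_i$ with zeros.
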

\begin{proof} Let $R = K[x_1, \ldots, x_n, y_1, \ldots, y_m]$. Assume $I$ is generated only by monomials with variables $x_i$, and $J$ is generated only by monomials of $y_j$.  Letting $\mathbf z$ be the vector of all exponent coordinates, let the hyperplanes of $\np(I)$ be denoted by $\{\mathbf w_i \cdot \mathbf z = b_i\}$, those of $\np(J)$ to be $\{\mathbf v_j \cdot \mathbf z = d_j\}$.  Let $I = (\mathbf{x}^{\alpha_1}, \cdots, \mathbf{x}^{\alpha_w})$, $J = (\mathbf{y}^{\beta_1}, \cdots, \mathbf{y}^{\beta_v})$.  Since $IJ = I\cap J$, $I \cap J = (\mathbf{x}^{\alpha_i}\mathbf{y}^{\beta_j} : 1 \le i \le w, 1 \le j \le v)$.  Note that all the coefficients of $\mathbf w_i$ corresponding to $y_j$ terms are zero, and all of the coefficients of $\mathbf v_j$ corresponding to $x_i$ terms are zero.  Thus, every generator of $I\cap J$ has an exponent vector satisfying the equations of both the hyperplanes of $\np(I)$ and $\np(J)$, so the hyperplanes of $\np(I \cap J)$ contain those of $\np(I)$ and $\np(J)$.

Now, consider an arbitrary hyperplane of $\np(I \cap J)$: $\mathbf p \cdot \mathbf z = e$.  We rewrite this as $\mathbf p_x \cdot \mathbf u_{x} + \mathbf p_y \cdot \mathbf u_{y} = e$, where $\mathbf p_x$ is the vector of the coefficients of $\mathbf p$ for the $x$ exponent vectors and zeroes for the $y$ exponent vectors and vice versa for $\mathbf p_y$, and $\mathbf u_{x}$ is the vector of $x$ exponents and zeroes for the $y$ exponents and vice versa for $\mathbf u_{y}$.  This hyperplane must contain every generator; in particular, it must contain every $\alpha_1\beta_j$ for all $j$.  This means that $ \mathbf p_y \cdot \beta_j = e - \mathbf p_x \cdot \alpha_1$.  That is, $\mathbf p_y \cdot \beta_j$ is constant for all $j$.  This means that $\mathbf p_y \cdot \mathbf z =  e - \mathbf u_x \cdot \alpha_1$ is a hyperplane of $\np(J)$.  We conclude that the hyperplanes of $\np(I \cap J)$ are precisely those of $\np(I)$ and $\np(J)$.

An analogous argument holds to show that the facet-defining halfspaces of $\np(I \cap J)$ are precisely those of $\np(I)$ and $\np(J)$.
    
\end{proof}

\begin{lem}\label{lem:primary} Let $\mathfrak q$ be a $\mathfrak p$-primary monomial ideal in $K[x_1, \ldots, x_n]$.  Then $\ell(\mathfrak q) = \mu(\mathfrak p)$.

\end{lem}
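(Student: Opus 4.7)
Since $\mathfrak{p}=\sqrt{\mathfrak{q}}$ is a monomial prime in $K[x_1,\ldots,x_n]$, it must have the form $(x_{i_1},\ldots,x_{i_r})$ for some subset of the variables, so $\mu(\mathfrak{p})=r$. After reindexing I would assume $\mathfrak{p}=(x_1,\ldots,x_r)$. The plan is to descend the problem into the smaller polynomial ring $K[x_1,\ldots,x_r]$, where $\mathfrak{q}$ becomes primary to the graded maximal ideal, and then invoke the embedding invariance of Theorem \ref{thm:embed} to transport the answer back to $K[x_1,\ldots,x_n]$.

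The first step would be to show that no minimal generator of $\mathfrak{q}$ is divisible by any $x_j$ with $j>r$. If such a generator $g=x_j g'$ existed, minimality would force $g'\notin\mathfrak{q}$, while $x_j g'=g\in\mathfrak{q}$ together with $x_j\notin\sqrt{\mathfrak{q}}=\mathfrak{p}$ would contradict the primary condition. This produces a monomial ideal $\mathfrak{q}_0\subseteq K[x_1,\ldots,x_r]$ with $\mathfrak{q}=\mathfrak{q}_0\,K[x_1,\ldots,x_n]$, and Theorem \ref{thm:embed} immediately yields $\ell(\mathfrak{q})=\ell(\mathfrak{q}_0)$.

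It then remains to prove $\ell(\mathfrak{q}_0)=r$ inside $K[x_1,\ldots,x_r]$. Here $\mathfrak{q}_0$ is primary to the maximal ideal $(x_1,\ldots,x_r)$, so $\height(\mathfrak{q}_0)=r=\dim K[x_1,\ldots,x_r]$. The upper bound $\ell(\mathfrak{q}_0)\le r$ is the fact $\ell(I)\le\dim R$ already recorded in Section~2. For the lower bound I would cite the standard inequality $\ell(I)\ge\height(I)$ from \cite{HS}. Alternatively, one can remain inside the Newton-polyhedron framework of this paper: the $\mathfrak{m}$-primary hypothesis forces $\NP(\mathfrak{q}_0)\supseteq\{u\in\RR_{\ge 0}^r:\sum u_i\ge N\}$ for some $N$, so that the portion of $\partial\NP(\mathfrak{q}_0)$ visible from the origin is a compact piecewise-linear hypersurface that must contain at least one facet of dimension $r-1$, and by Lemma \ref{lem:strictDec} such a facet is cut out by a strictly decreasing functional and hence is a compact face of $\NP(\mathfrak{q}_0)$ of the required dimension.

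The only genuinely conceptual step is the first one: the observation that a primary monomial ideal is supported on the variables of its radical. Once that reduction is in hand, the remainder is a routine combination of Theorem \ref{thm:embed} with the well-known sandwich $\height(I)\le\ell(I)\le\dim R$, and no additional input from the Newton-polytope machinery of Section~3 is needed.
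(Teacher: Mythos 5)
Your proof is correct, and its skeleton matches the paper's: both arguments reduce to the case of an $\mathfrak m$-primary ideal in the smaller polynomial ring on the variables of $\mathfrak p$ and then invoke Theorem \ref{thm:embed} to transport the answer back to $K[x_1,\dots,x_n]$. You are right to make explicit the observation that a $\mathfrak p$-primary monomial ideal has all of its minimal generators in the variables of $\mathfrak p$; the paper uses this silently when it passes to $K[\mathfrak p]$, and your divisibility argument (if $x_jg'$ were a minimal generator with $x_j\notin\mathfrak p$, then $g'\notin\mathfrak q$ and $x_jg'\in\mathfrak q$ would violate primaryness) is exactly the right justification. Where you genuinely diverge is in the $\mathfrak m$-primary case itself: the paper argues that $\overline{\mathfrak q}$ is again $\mathfrak m$-primary and cites Theorem 5.4.6 of \cite{HS} (that $\ell(I)=n$ precisely when $\mathfrak m\in\Ass(R/\overline{I^m})$ for some $m$), whereas you sandwich $\operatorname{ht}(\mathfrak q_0)\le\ell(\mathfrak q_0)\le\dim R$. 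The height inequality is standard and in \cite{HS}, so this is a legitimate and arguably more elementary route that avoids associated primes of integral closures of powers altogether. One caution: your alternative ``visible boundary'' argument via the Newton polyhedron is the right picture but is not yet a proof. The face of $\NP(\mathfrak q_0)$ on which $\sum u_i$ is minimized is certainly compact, but it need not a priori be a facet, so the assertion that the lower boundary ``must contain at least one facet of dimension $r-1$'' still owes an argument (for instance, via the normal fan, that some facet normal has all entries of one strict sign, as in Lemma \ref{lem:strictDec}). Keep the height inequality as the actual proof and the polyhedral description as intuition.
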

\begin{proof} First assume that $\mathfrak q$ is an $\mathfrak m$-primary monomial ideal.  Ideals and their reductions share the same minimal primes (Lemma 8.1.10, \cite{HS}), so if $\mathfrak q$ is an $\mathfrak m$-primary ideal, $\overline{\mathfrak q}$ is also $\mathfrak m$-primary.  By Theorem 5.4.6 of \cite{HS}, $\mathfrak m \in \operatorname{Ass}(R/\overline{I^m})$ for some $m$ if and only if $\ell(I) = n$.  Thus, $\ell(\mathfrak q) = n$.

Now assume $\mathfrak q$ is $\mathfrak p$-primary, $\mathfrak p \ne \mathfrak m$.  In $K[\mathfrak p]$, the ring with only the variables of $\mathfrak p$ as indeterminates, $\mathfrak qK[\mathfrak p]$ has analytic spread $\mu(\mathfrak p)$ by the above argument.  But by Theorem $\ref{thm:embed}$, this means $\ell(\mathfrak q) = \mu(\mathfrak p)$.
    
\end{proof}

We now describe the analytic spread of ideals with disjointly generated primary decompositions.

\begin{thm} \label{thm:disGenPrimary} Let $I = \bigcap\limits_{i=1}^r \mathfrak q_i$, where each $\mathfrak q_i$ is monomial and $\mathfrak p_i$-primary, each $\mathfrak p_i$ is disjointly generated from the others, and every variable is a generator of some $\mathfrak p_i$.  Then $\ell(I) = n - r + 1$.
\end{thm}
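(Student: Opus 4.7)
The plan is to apply Theorem \ref{thm:spreadCalc} for the upper bound $\ell(I) \le n - r + 1$ and then match it using the product formula $\ell(IJ) = \ell(I) + \ell(J) - 1$ from \cite{HV}. Because $\mathfrak{p}_i$ is a monomial prime and $\mathfrak{q}_i$ is $\mathfrak{p}_i$-primary, $\mathfrak{q}_i$ is generated by monomials in the variables of $\mathfrak{p}_i$ alone, so the disjoint generation of the $\mathfrak{p}_i$'s is inherited by the $\mathfrak{q}_i$'s. Iterating Lemma \ref{lem:disGen} (with Lemma \ref{lem:embedding} keeping track of coordinate positions), I obtain a minimal defining set of hyperplanes and facet-defining halfspaces of $\np(I)$ that decomposes into $r$ blocks --- one per $\mathfrak{p}_i$ --- where every vector in block $i$ is obtained from a defining hyperplane or halfspace of $\np(\mathfrak{q}_i K[\mathfrak{p}_i])$ by padding with zeros on the coordinates outside $\mathfrak{p}_i$.

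Set $n_i := \mu(\mathfrak{p}_i)$, and write $s_i$ for the number of defining hyperplanes of $\np(\mathfrak{q}_i K[\mathfrak{p}_i])$ and $k_i$ for the minimum number of halfspaces needed to produce an all-negative combination for it inside $K[\mathfrak{p}_i]$. Since $\mathfrak{q}_i K[\mathfrak{p}_i]$ is $\mathfrak{m}_i$-primary, Lemma \ref{lem:primary} gives $\ell(\mathfrak{q}_i K[\mathfrak{p}_i]) = n_i$, the maximum possible in a polynomial ring of $n_i$ variables. Applying Theorem \ref{thm:spreadCalc} inside that subring forces $s_i + k_i \le 1$; since the zero vector is not strictly negative, $s_i + k_i \ge 1$ as well, so $s_i + k_i = 1$ for each $i$. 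Because each hyperplane/halfspace vector of $\np(I)$ has support entirely within a single $\mathfrak{p}_i$-block, a coordinate of $\mathbf W$ lying in $\mathfrak{p}_i$ can only be made negative by contributions from block $i$; hence the minimum global counts satisfy $s = \sum_i s_i$ and $k = \sum_i k_i$, giving $s + k = r$. Theorem \ref{thm:spreadCalc} then yields $\ell(I) \le n + 1 - r$.

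For the reverse inequality, disjointness forces $I = \prod_i \mathfrak{q}_i$, and iterating $\ell(IJ) = \ell(I) + \ell(J) - 1$ of \cite{HV}, together with Lemma \ref{lem:primary} and the identity $\sum_i n_i = n$ (every variable belongs to some $\mathfrak{p}_i$), produces $\ell(I) = \sum_i \ell(\mathfrak{q}_i) - (r-1) = n - r + 1$. The main obstacle is the block-independence step in the second paragraph: one must verify cleanly that the minimal global $k$ in Theorem \ref{thm:spreadCalc} is exactly $\sum_i k_i$, rather than something smaller achieved by mixing blocks. This is precisely where the padding with zeros pays off --- the blocks decouple entrywise, so no cancellation across blocks is possible and the block minima add.
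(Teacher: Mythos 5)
Your proof is correct and follows essentially the same route as the paper: reduce to each $\mathfrak q_i$ inside $K[\mathfrak p_i]$, use Lemma \ref{lem:primary} together with Theorem \ref{thm:spreadCalc} to force $s_i + k_i = 1$, and assemble the blocks via Lemmas \ref{lem:embedding} and \ref{lem:disGen} to get $s + k = r$ and hence $\ell(I) \le n - r + 1$. You are in fact more careful than the paper on two points: you verify that the block minima for $k$ add (no cross-block cancellation is possible since the defining vectors have disjoint supports), and you supply the lower bound explicitly from the product formula of \cite{HV}, whereas the paper's own proof treats the inequality of Theorem \ref{thm:spreadCalc} as an equality and defers the appeal to \cite{HV} to the example that follows.
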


\begin{proof} Let us first consider $\mathfrak q_i$ in $K[\mathfrak p_i]$.  By Lemma \ref{lem:primary}, each $\mathfrak q_i$ has analytic spread $\mu(\mathfrak p_i)$.  Considering this in $K[\mathfrak p_i]$, applying  Theorem \ref{thm:spreadCalc}, $\ell(\mathfrak q_i) = \mu(\mathfrak p_i) = \mu(\mathfrak p_i) + 1 - s_i - k_i$, where $s_i$ and $k_i$ are as in $\ref{thm:spreadCalc}$.  Thus, $s_i + k_i = 1$, so $s_i = 1, k_i = 0$ or $k_i = 1, s_i = 0$.  If the former, there is exactly one hyperplane that $\np(\mathfrak q_i)$ lies on, and that hyperplane acts as a strictly decreasing linear functional.  If the latter, $\np(\mathfrak q_i)$ does not lie on a single hyperplane, and has associated to it one halfspace that acts as a strictly decreasing linear functional.

Thus, from Lemma \ref{lem:embedding} and Lemma \ref{lem:disGen}, to each $\mathfrak q_i$, there is either a hyperplane or a halfspace of $\np(\mathfrak q_i)$, with all negative coefficients.   Say there are $m$ with the hyperplane condition.  Take $\mathbf W$ to be the sum of these.  So $\ell(I) = n + 1 - (m - (m - r)) = n - r + 1$.
    
\end{proof}

\begin{ex} Let $R = K[x_1, \ldots, x_n]$, $I = \bigcap\limits_{i=1}^r \mathfrak q_i^{m_i}$, $m_i > 0$, where each $\mathfrak q_i = (x_{i1}^{e_{i1}}, \ldots, x_{ih_i}^{e_{ih_i}})$, monomial ideals generated by pure powers of variables.  Let $\mathfrak p_i = \sqrt{\mathfrak q_i} = (x_{i1}, \ldots, x_{ih_i})$, and assume that $\mathfrak p_i$ and $\mathfrak p_j$ are disjointly generated, and that every variable of the polynomial ring appears as a minimal generator of some $\mathfrak p_i$.

Now, by Proposition 8.1.5 of \cite{HS}, $\mathfrak q_i ^{[m_i]}$, the $m_i$-th Frobenius power, is a reduction of $\mathfrak q_i^{m_i}$.  Furthermore, since $\mathfrak q_i^{m_i}$ and $\mathfrak q_j^{m_j}$ have disjoint generating sets, $\mathfrak q_i^{m_i} \cap \mathfrak q_j^{m_j} = \mathfrak q_i^{m_i}\mathfrak q_j^{m_j}$.  Therefore, by Proposition 8.1.7 of \cite{HS}, $J = \bigcap\limits_{i = 1}^r \mathfrak q_i^{[m_i]}$ is a reduction of $I$, so it suffices to study $\ell(J)$.

The ideal $J$ is generated by all $\prod\limits_{i=1}^r h_i$ possible combinations of generators chosen from each generating set of the $\mathfrak q_i$, and each of the generators is raised to the appropriate power $m_i$.  So certainly, every generator of $J$ lies on the hyperplanes $\left\{\mathbf w_i \cdot \mathbf u := \frac{u_{i1}}{e_{i1}} + \frac{u_{i2}}{e_{i2}} + \cdots + \frac{u_{ih_i}}{e_{ih_i}} = m_i\right\}$, and thus $\{\mathbf w_i \cdot \mathbf u = m_i\}$ is a subset of the hyperplanes of $\np(J)$.  Letting $\mathbf W = \sum_{i = 1}^r -\mathbf w_i$, we have a strictly decreasing linear functional.  Thus, by Theorem \ref{thm:spreadCalc}, $\ell(I) = \ell(J) \le n + 1 - r$, and from \cite{HV}, we know this to be an equality.

\end{ex}

Interestingly enough, the analytic spread in the above example is independent of the powers of the generators, $e_{ij}$, the powers of the $\mathfrak {q_i}$, $m_i$, and the heights of the primes, $h_i$, and is only dependent on the number of primary ideals, $r$.  


\subsection{Intersections of powers of two monomial primes}

The obvious next step is to see what happens when the primary decomposition is not disjointly generated.  This proves much more difficult, as Lemma \ref{lem:disGen} does not apply, so it is not easy to find the Newton polytope in general.  However, in the following special case, we can apply Theorem \ref{thm:spreadCalc}.

\begin{thm}\label{thm:twoPrimes}
    Let $R = K[x_1, \ldots, x_s, y_1, \ldots, y_t, z_1, \ldots z_r]$. Let $\mathfrak{p}=(x_1,\dots,x_s,y_1,\dots,y_t)$ and $\mathfrak{q}=(y_1,\dots, y_t,z_1,\dots z_r)$, and let $I=\mathfrak{p}^a\cap\mathfrak{q}^b$ for some $a, b \ge 1$.  Then $\ell(I)\le n - 1$, where $n = r + s + t$ is the dimension of $R$.
\end{thm}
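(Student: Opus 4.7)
The plan is to apply Theorem \ref{thm:cpctFace} directly to the Newton polyhedron $\NP(I)$. Since $\ell(I) \le n-1$ is equivalent to $\NP(I)$ having no compact face of dimension $n-1$ --- that is, no compact facet --- and since Lemma \ref{lem:strictDec} tells us a facet of $\NP(I) = \np(I) + \RR_{\ge 0}^n$ is compact if and only if its defining linear functional is strictly decreasing, it suffices to enumerate the facet-defining halfspaces of $\NP(I)$ and verify that none of their normals has all strictly negative coordinates.

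To that end, I would first compute $\NP(I)$ explicitly.  Because $\mathfrak{p}^a$ and $\mathfrak{q}^b$ are ordinary powers of primes generated by subsets of the variables, each is integrally closed, and the Newton polyhedron of their intersection equals the intersection of their Newton polyhedra.  Writing coordinates $(u_1, \ldots, u_s, v_1, \ldots, v_t, w_1, \ldots, w_r)$ for the exponents of the $x$'s, $y$'s, and $z$'s respectively, this yields
\[
    \NP(I) = \left\{(u,v,w) \in \RR_{\ge 0}^n : \sum_{i=1}^s u_i + \sum_{j=1}^t v_j \ge a, \ \sum_{j=1}^t v_j + \sum_{k=1}^r w_k \ge b\right\}.
\]
A routine check shows these $n+2$ inequalities are irredundant --- each can be violated individually by an explicit point satisfying all the others (for example, the point with $w_1 = b$ and every other coordinate zero violates only $\sum u_i + \sum v_j \ge a$) --- so they constitute the complete facet description of $\NP(I)$.

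Finally, I would inspect the inward normals of these facets: the two sum-halfspaces give the vectors $(-1,\ldots,-1,-1,\ldots,-1,0,\ldots,0)$ (zeros on the $w$-block) and $(0,\ldots,0,-1,\ldots,-1,-1,\ldots,-1)$ (zeros on the $u$-block), and each coordinate-nonnegativity halfspace contributes a normal $-\mathbf{e}_i$.  None of these has all strictly negative entries, so by Lemma \ref{lem:strictDec} no facet of $\NP(I)$ is compact.  Therefore every compact face of $\NP(I)$ has dimension at most $n-2$, and Theorem \ref{thm:cpctFace} yields $\ell(I) \le n-1$.  The main step that requires attention is confirming that the description of $\NP(I)$ above is complete --- this is a straightforward irredundancy verification and I do not expect it to be a serious obstacle.
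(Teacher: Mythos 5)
Your route differs from the paper's (which works with the hyperplanes and facet-defining halfspaces of the Newton \emph{polytope} $\np(I)$ via Theorem \ref{thm:spreadCalc}, splitting into the cases $a=b$ and $a>b$), and in principle it can be made to work. But the key step is not where you locate it, and as justified it fails. The identity $\NP(I\cap J)=\NP(I)\cap\NP(J)$ is \emph{not} a general consequence of $I$ and $J$ being integrally closed monomial ideals: in $K[x,y]$ take $I=(x^2,y)$ and $J=(x,y^2)$, both integrally closed; then $I\cap J=(x,y)^2$, so $\NP(I\cap J)=\{u+v\ge 2\}\cap\RR_{\ge0}^2$, whereas $\NP(I)\cap\NP(J)=\{u+2v\ge2,\ 2u+v\ge2\}\cap\RR_{\ge0}^2$ contains $(2/3,2/3)$ and is strictly larger. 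The inclusion $\NP(I)\subseteq\NP(\mathfrak p^a)\cap\NP(\mathfrak q^b)$ is immediate (every generator of $I$ satisfies the listed inequalities), but the reverse inclusion is exactly the content of your claimed halfspace description, and it carries the weight of the proof: if $\NP(I)$ were strictly smaller than the intersection polyhedron, it would have facets other than the $n+2$ you list, and one of those could a priori be compact.

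The ``routine irredundancy verification'' you defer does not address this: irredundancy only shows that each listed inequality cuts out a facet of the intersection polyhedron $Q$, not that $Q=\NP(I)$. What you actually need is that every vertex of $Q$ is the exponent vector of a monomial of $I$, for then $Q=\mathrm{conv}(\text{vertices of }Q)+\RR^n_{\ge0}\subseteq\NP(I)$. This is true here and is a finite check: the vertices of $Q$ are $a\mathbf e_{x_i}+b\mathbf e_{z_k}$, together with $(a-b)\mathbf e_{x_i}+b\mathbf e_{y_j}$ and $a\mathbf e_{y_j}$ when $a\ge b$ (respectively $a\mathbf e_{y_j}+(b-a)\mathbf e_{z_k}$ and $b\mathbf e_{y_j}$ when $b\ge a$), all of which are exponents of generators of $I$; note that the case analysis on the sign of $a-b$ carried out in the paper resurfaces at precisely this point. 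Once this is supplied, the rest of your argument is fine: assuming $s,r\ge1$, none of the listed normals is strictly negative in every coordinate, so by Lemma \ref{lem:strictDec} no facet of $\NP(I)$ is compact, every compact face has dimension at most $n-2$, and Theorem \ref{thm:cpctFace} gives $\ell(I)\le n-1$.
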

\begin{proof}
    We first note that $\np(I)$ has codimension at most 2.  Indeed, for $i=1,\dots,t$, $j=2,\dots,r$ and $k=1,\dots,s$, the monomials $y_k^a$, $x_1^az_j^b$, and $x_j^az_1^b$ are generators of $I$, and the exponent vectors of these generators are affinely independent.

    Now we consider the case where $a=b$. Let $\mathbf{h}_\mathfrak{p}$ be the vector with $1$ in each coordinate corresponding to $x_i$ or $y_j$ and 0 elsewhere.  Let $\mathbf{h}_\mathfrak{q}$ be the vector with $1$ in each coordinate corresponding to $z_i$ or $y_j$ and 0 elsewhere.  Then the exponent vector of every generator in $I$ satisfies $\mathbf{h}_{\mathfrak{p}} \cdot\mathbf{u}=a$ and $\mathbf{h}_{\mathfrak{q}} \cdot\mathbf{u}=a$.  Since the codimension of $\np(I)$ is at most 2, we see that these are exactly the hyperplanes of $\np(I)$.  Thus, by Theorem \ref{thm:spreadCalc}, $\ell(I)=n-1$. 

    Now suppose that $a>b$.  We claim that $\np(I)$ is contained in the hyperplane defined by $\mathbf{h}_\mathfrak{p}\cdot\mathbf{u}=a$.  The generators of $I$ are of the form $\mbox{lcm}(f,g)$ where $f\in\mathfrak{p}^a$ and $g\in\mathfrak{q}^b$.  Since $\mbox{lcm}(f,g)=\frac{fg}{\mbox{gcd}(f,g)}$ and since $a > b$, we know that $\mbox{lcm}(f,g)=fg'$ where $g'$ is a monomial in $\{z_1,\dots,z_r\}$.  Since the generators of $\mathfrak{p}^a$ all satisfy $\mathbf{h}_\mathfrak{p}\cdot\mathbf{u}=a$, we see that the generators of $I$ do as well.  Thus $\np(I)$ has codimension at least 1, and is contained in the hyperplane defined by $\mathbf{h}_\mathfrak{p}\cdot\mathbf{u}=a$.

    Let $\mathbf{H}$ be the vector with $-b$ in each coordinate corresponding to an $x_i$ and $a$ in each coordinate corresponding to a $z_j$.  We claim that the intersection of the halfspace defined by $\mathbf{H}\cdot\mathbf{u}\leq 0$ and $\np(I)$ is a facet of $\np(I)$.  We note that every generator of $I$ has the form $f'vg'$, where $f'$ is a monomial in $\{x_1,\dots,x_s\}$, $g'$ is a monomial in $\{z_1,\dots,z_r\}$, and $v$ is a monomial in $\{y_1,\dots,y_t\}$.  Furthermore, we note that $\deg f'=a-\deg v$ and $\deg g'=\max\{b-\deg v,0\}$.  Since $a>b$ we see then that the exponent vector of every monomial generator of $I$ satisfies $\mathbf{H}\cdot\mathbf{u}\leq 0$. Note that the generator $x_1^{a-b}y_1^b$ satisfies this inequality strictly, so $\mathbf{H}\cdot\mathbf{u}= 0$ is not a hyperplane of $\np(I)$.
    
 Furthermore, note that, for $i=1,\dots,t$, $j=2,\dots,r$, and $k=1,\dots,s$, the monomials $y_k^a$, $x_1^az_j^b$, and $x_i^az_1^b$ are all generators of $I$ whose exponent vectors satisfy $\mathbf{H}\cdot\mathbf{u}=0$.  The convex hull of these exponent vectors has codimension $2$.  Since $\np(I)$ has a codimension 2 face contained in the hyperplane $\mathbf{H}\cdot\mathbf{u}=0$, and a vertex not contained in that hyperplane, we see that $\np(I)$ has codimension 1.
 Thus we see that the halfspace $\mathbf{H}\cdot\mathbf{u}\leq0$ defines a facet of $\np(I)$.

 Thus by Theorem \ref{thm:spreadCalc}, it follows that $\ell(I)\le n-1$.
\end{proof}

Now, we aim to show equality.  To do so, we need a few more items:

\begin{defn} Let $I$ be an ideal in a Noetherian ring, $R$.  The \textit{arithmetic rank}, or \textit{arithmetical rank}, of an ideal $I$, denoted $\operatorname{ara} (I)$, is defined to be:
\[
    \operatorname{ara} (I) = \min\{g : \text{there exist }a_1, \ldots, a_g \in R \text{ such that } \sqrt{(a_1, \ldots, a_g)} = \sqrt{I} \},
\]
where $\sqrt{I}$ is the radical of $I$.
    
\end{defn}

\begin{rem} For an ideal $I$ in a Noetherian ring $R$, $\operatorname{ara} (I) \le \ell(I)$. This can be seen since, for a minimal reduction $J$ of $I$, $J$ is generated by $\ell(I)$ elements, and, since reductions have the same radical, $\sqrt J = \sqrt I$.  It is also clear that $\operatorname{ara} (I) = \operatorname{ara}(\sqrt{I})$.
    
\end{rem}

Lyubeznik \cite{Lyu} has the following result:
\begin{prop} If $I$ is a squarefree monomial ideal in the polynomial ring $R$, then
\[
    \operatorname{pd}(R/I) \le \operatorname{ara} I.
\]
    
\end{prop}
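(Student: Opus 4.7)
The plan is to factor the desired inequality through the cohomological dimension
$\operatorname{cd}(I, R) := \max\{i : H^i_I(R) \ne 0\}$
and prove the chain
\[
\operatorname{pd}(R/I) \le \operatorname{cd}(I, R) \le \operatorname{ara}(I).
\]

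The right-hand inequality is general and requires no squarefree hypothesis; the plan is to use that local cohomology depends only on the radical of its defining ideal. If $\sqrt{I} = \sqrt{(a_1, \ldots, a_g)}$ with $g = \operatorname{ara}(I)$, then $H^i_I(R) \cong H^i_{(a_1, \ldots, a_g)}(R)$, and this module is the $i$-th cohomology of the \v{C}ech complex on $a_1, \ldots, a_g$, a complex concentrated in homological degrees $0$ through $g$. Hence $H^i_I(R) = 0$ for $i > g$, so $\operatorname{cd}(I, R) \le \operatorname{ara}(I)$.

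The left-hand inequality, $\operatorname{pd}(R/I) \le \operatorname{cd}(I, R)$, is where the squarefree hypothesis is essential and is the main obstacle. One route combines Auslander--Buchsbaum, $\operatorname{pd}(R/I) = n - \operatorname{depth}(R/I)$, with Hochster's dual formulas: for a Stanley--Reisner ideal $I = I_\Delta$ of a simplicial complex $\Delta$, both the graded Betti numbers of $R/I$ and the graded pieces of $H^i_{\mathfrak{m}}(R/I)$ are expressible in terms of reduced simplicial cohomology of subcomplexes of $\Delta$ or of its Alexander dual. Comparing the two descriptions, typically after passing to the Alexander dual $I_{\Delta^\vee}$ (where vanishing of local cohomology of $R/I$ translates into vanishing of $\operatorname{Tor}$ groups for $R/I_{\Delta^\vee}$), yields the desired bound. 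An alternative, closer to Lyubeznik's own approach, is to work within the category of squarefree modules (or $F$-modules in positive characteristic), where a structural theorem forces the bound directly. Either way, the genuinely hard step is this squarefree-specific inequality; once it is in hand, concatenation with the \v{C}ech bound completes the proof.
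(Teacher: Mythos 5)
The paper does not actually prove this proposition---it is quoted as a known result of Lyubeznik \cite{Lyu}---so there is no in-text argument to measure yours against. Your factorization $\operatorname{pd}(R/I)\le \operatorname{cd}(I,R)\le \operatorname{ara}(I)$ is the standard route and is essentially how the cited result is established. The right-hand inequality is complete as you state it: local cohomology depends only on the radical, and the \v{C}ech complex on $g=\operatorname{ara}(I)$ elements has length $g$. The left-hand inequality is, as you correctly identify, the entire content of the theorem, and there your text is a pointer rather than a proof: ``comparing the two descriptions \dots\ yields the desired bound'' compresses a genuine theorem (Terai's equality $\operatorname{cd}(I,R)=n-\operatorname{depth}(R/I)$ for squarefree monomial ideals, equivalently Musta\c{t}\u{a}'s computation of $H^i_I(R)$, or Lyubeznik's original argument) into one sentence without carrying out the comparison. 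What must actually be shown is $H^{\operatorname{pd}(R/I)}_I(R)\ne 0$; via Alexander duality this amounts to matching Hochster's formula for the graded pieces of the local cohomology of $R/I_{\Delta^\vee}$ against Hochster's formula for the Betti numbers of $R/I_\Delta$, both expressed through reduced simplicial cohomology of the same subcomplexes. If you intend to cite Terai or Lyubeznik for that step, your proof is fine and matches the intended provenance of the statement; if it is meant to be self-contained, that unexecuted comparison is the gap.
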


Combining the above with the Auslander-Buchsbaum formula, we have the following:

\begin{prop}\label{prop:inEqChain} Let $I$ be a monomial ideal in a polynomial ring of $n$ variables.  Then, $n - \operatorname{depth}(R/{\sqrt I}) \le \ell(I)$.
\end{prop}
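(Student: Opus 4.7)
The plan is to string together all the facts just laid out in the excerpt into a single chain of inequalities. Since $I$ is a monomial ideal, its radical $\sqrt{I}$ is a squarefree monomial ideal, so Lyubeznik's inequality applies to $\sqrt{I}$: we have $\operatorname{pd}(R/\sqrt{I}) \le \operatorname{ara}(\sqrt{I})$. Combining this with the remark that $\operatorname{ara}(\sqrt{I}) = \operatorname{ara}(I)$ and the inequality $\operatorname{ara}(I) \le \ell(I)$ gives $\operatorname{pd}(R/\sqrt{I}) \le \ell(I)$.

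To rewrite the left-hand side, I would invoke the Auslander--Buchsbaum formula. Since $R = K[x_1,\dots,x_n]$ is a regular ring, every finitely generated module has finite projective dimension, so Auslander--Buchsbaum gives
\[
\operatorname{pd}(R/\sqrt{I}) + \operatorname{depth}(R/\sqrt{I}) = \operatorname{depth}(R) = n,
\]
where we may work in the $\mathfrak{m}$-graded (or localized) setting so that depth is well-defined and equal to $n$ for $R$ itself. Rearranging yields $\operatorname{pd}(R/\sqrt{I}) = n - \operatorname{depth}(R/\sqrt{I})$.

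Putting these together:
\[
n - \operatorname{depth}(R/\sqrt{I}) \;=\; \operatorname{pd}(R/\sqrt{I}) \;\le\; \operatorname{ara}(\sqrt{I}) \;=\; \operatorname{ara}(I) \;\le\; \ell(I),
\]
which is exactly the claimed inequality. There is no genuine obstacle here; the proposition is a synthesis of the three facts immediately preceding it (Lyubeznik's bound, the radical-invariance of $\operatorname{ara}$, and $\operatorname{ara} \le \ell$) together with Auslander--Buchsbaum. The only small point to verify is that the setup for Auslander--Buchsbaum is legitimate, which holds because $R$ is a regular graded (or regular local) ring, so finite projective dimension is automatic for $R/\sqrt{I}$.
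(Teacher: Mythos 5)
Your proof is correct and is exactly the argument the paper intends: chain Lyubeznik's bound applied to the squarefree ideal $\sqrt{I}$, the radical-invariance of $\operatorname{ara}$ together with $\operatorname{ara}(I)\le\ell(I)$, and the Auslander--Buchsbaum formula over the regular ring $R$. The paper leaves this synthesis implicit (it states the proposition immediately after the three ingredients), and your write-up fills in precisely those steps.
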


With this, we can prove the following:

\begin{thm}\label{thm:twoPrimaryIdeals} Let $I = \mathfrak q_1 \cap \mathfrak q_2$ be a monomial primary decomposition of $I$.  Suppose the number of variables which make up generators of $\mathfrak q_1$ and $\mathfrak q_2$ is $m$.  Then $m - 1 \le \ell(I) \le m$.
    
\end{thm}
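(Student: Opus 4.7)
The plan is to establish the two inequalities separately, leveraging the tools assembled just above the statement. First, by Theorem \ref{thm:embed}, I may replace $R$ with its polynomial subring in only the $m$ variables that actually appear among the generators of $\mathfrak{q}_1$ and $\mathfrak{q}_2$; this does not change $\ell(I)$, so I may assume $\dim R = m$. The upper bound $\ell(I) \le m$ is then immediate from the standard inequality $\ell(I) \le \dim R$ noted after Definition \ref{def:reesAnSp}.

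For the lower bound, I will apply Proposition \ref{prop:inEqChain}, which gives $\ell(I) \ge m - \operatorname{depth}(R/\sqrt{I})$; thus it suffices to show $\operatorname{depth}(R/\sqrt{I}) \le 1$. Set $\mathfrak{p}_i = \sqrt{\mathfrak{q}_i}$, so $\sqrt{I} = \mathfrak{p}_1 \cap \mathfrak{p}_2$. Because every variable of $R$ appears as a generator of some $\mathfrak{p}_i$, we have $\mathfrak{p}_1 + \mathfrak{p}_2 = \mathfrak{m}$, and hence the Mayer--Vietoris short exact sequence reads
\[
0 \to R/\sqrt{I} \to R/\mathfrak{p}_1 \oplus R/\mathfrak{p}_2 \to R/\mathfrak{m} \to 0.
\]
Applying $\operatorname{Hom}_R(k,-)$ and recording the resulting long exact sequence, the relevant segment is
\[
\operatorname{Hom}(k, R/\mathfrak{p}_1) \oplus \operatorname{Hom}(k, R/\mathfrak{p}_2) \to \operatorname{Hom}(k, k) \to \operatorname{Ext}^1(k, R/\sqrt{I}).
\]
Provided neither $\mathfrak{p}_i$ equals $\mathfrak{m}$, each summand on the left vanishes because $\operatorname{depth}(R/\mathfrak{p}_i) = m - \mu(\mathfrak{p}_i) \ge 1$, so the generator of $\operatorname{Hom}(k, k) = k$ injects into $\operatorname{Ext}^1(k, R/\sqrt{I})$. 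This forces $\operatorname{Ext}^1(k, R/\sqrt{I}) \ne 0$ and therefore $\operatorname{depth}(R/\sqrt{I}) \le 1$, as required.

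The principal obstacle is the degenerate case in which one of the $\mathfrak{p}_i$, say $\mathfrak{p}_1$, equals $\mathfrak{m}$. Then the Mayer--Vietoris argument degenerates (one of the terms $R/\mathfrak{p}_i$ itself has depth $0$), but the conclusion ought to become even stronger: because $\mathfrak{q}_1$ is $\mathfrak{m}$-primary, $\mathfrak{m}$ should be an associated prime of $R/\overline{I^k}$ for some $k$, and Theorem 5.4.6 of \cite{HS} (used already in Lemma \ref{lem:primary}) would yield $\ell(I) = m$ outright, placing $\ell(I)$ well within the claimed range. Apart from this edge case, the argument is a short homological computation built on the tools already developed.
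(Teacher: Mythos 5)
Your skeleton coincides with the paper's: reduce to $m$ variables via Theorem \ref{thm:embed}, get the upper bound from $\ell(I)\le\dim R$, and get the lower bound from Proposition \ref{prop:inEqChain} by bounding $\depth(R/\sqrt I)$. Where you genuinely diverge is in how that depth bound is obtained. The paper computes $\sqrt I=\mathfrak p_1\cap\mathfrak p_2=(y_1,\dots,y_t,\,x_iz_j)$ explicitly, exhibits $x_1-z_1$ as a regular element on $R/\sqrt I$, and checks by hand that the quotient by it has the maximal ideal as an associated prime, yielding $\depth(R/\sqrt I)=1$. Your Mayer--Vietoris argument is correct and proves only $\depth(R/\sqrt I)\le 1$, which is all that Proposition \ref{prop:inEqChain} needs: after the reduction to $m$ variables one indeed has $\mathfrak p_1+\mathfrak p_2=\mathfrak m$, the sequence $0\to R/\sqrt I\to R/\mathfrak p_1\oplus R/\mathfrak p_2\to R/\mathfrak m\to 0$ is exact, $\Hom(K,R/\mathfrak p_i)=0$ because each $R/\mathfrak p_i$ is a positive-dimensional polynomial ring (when $\mathfrak p_i\ne\mathfrak m$), and the connecting homomorphism then forces $\Ext^1(K,R/\sqrt I)\ne 0$, hence $\depth(R/\sqrt I)\le 1$ by the Ext-characterization of depth. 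This is shorter and more structural than the paper's computation, and it generalizes verbatim to any two monomial primes summing to $\mathfrak m$; the trade-off is that it gives an inequality where the paper pins down the depth exactly (which is irrelevant here).

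The soft spot is the degenerate case where some $\mathfrak p_i=\mathfrak m$, equivalently one radical contains the other. You are right to isolate it, and to your credit the paper's own proof silently excludes it as well (the element $x_1-z_1$ requires both an $x$-variable and a $z$-variable, i.e., $r,s\ge 1$). However, your proposed patch is not yet a proof: the fact that $\mathfrak q_1$ is $\mathfrak m$-primary gives $\mathfrak m\in\Ass(R/I)$, not $\mathfrak m\in\Ass(R/\overline{I^k})$, and integral closure of powers can destroy embedded primes, so Theorem 5.4.6 of \cite{HS} does not apply ``outright.'' (A repair is available in special cases --- e.g.\ if $\mathfrak p_2=(x_1)$ then $\mathfrak q_2=(x_1^c)$, so $I=x_1^c(\mathfrak q_1:x_1^c)$ with $\mathfrak q_1:x_1^c$ still $\mathfrak m$-primary, and translating a Newton polyhedron preserves its compact faces, whence $\ell(I)=m$ by Lemma \ref{lem:primary} --- but the general embedded case needs its own argument.) The cleanest resolution, matching what both Theorem \ref{thm:twoPrimes} and Corollary \ref{cor:twoPrimesEq} actually use, is to assume $\mathfrak p_1$ and $\mathfrak p_2$ incomparable; with that hypothesis made explicit, your proof is complete.
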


\begin{proof} By Theorem \ref{thm:embed}, we can assume that we are in a polynomial ring of $m$ variables.  This concludes the second inequality.

From Proposition \ref{prop:inEqChain}, it suffices to show that $\depth(R/\sqrt{I}) = \depth(R/\mathfrak p_1 \cap \mathfrak p_2) = 1$.

Let $\mathfrak p_1 = (x_1, \ldots, x_s, y_1, \ldots, y_t)$ and $\mathfrak p_2 = (y_1, \ldots, y_t, z_1, \ldots, z_r)$, where $r + s + t = m$.  Let $R' = R/\mathfrak p_1 \cap \mathfrak p_2$.  We can see that $\mathfrak p_1 \cap \mathfrak p_2 = (y_1, \ldots, y_t, x_iz_j : 1 \le i \le s, 1 \le j \le r)$.  Thus, $R' \cong \factor{K[x_1, \ldots, x_s, z_1, \ldots, z_r]}{(x_iz_j : 1 \le i \le s, 1 \le j \le r)}$.  Now, let $f = x_1 - z_1 + I\in R'$.  We will show that $S = (f)$ is a maximal regular sequence.

Suppose there is $0 \ne g \in R'$ so that $fg = 0$ in $R'$.  This corresponds to a $G \in R$ such that $(x_1-z_1)G \in I$, but $G \not\in I$.  Since $G \not \in I, G = P(x_1, \ldots, x_s) + Q(z_1, \ldots,z_r)$ for some polynomials $P$ and $Q$.  Then, we have that $(x_1 - z_1)(P(x_1, \ldots, x_s) + Q(z_1, \ldots,z_r)) = x_1P(x_1, \ldots, x_s) + x_1Q(z_1, \ldots,z_r)) - z_1P(x_1, \ldots, x_s) - z_1Q(z_1, \ldots,z_r)) \in I$.  The second and third terms in the sum are in $I$, so we must have $x_1P(x_1, \ldots, x_s) + z_1Q(z_1, \ldots, z_r) \in I$.  But no element of this form can exist, since none of the monomial generators show up in any of the terms.  Thus, no $G$ can exist, so $g = 0$ in $R'$.

Now, let $M = R'/(f) \cong \factor{K[x_1, \ldots, x_s, z_2, \ldots, z_r]}{(x_1^2, x_1x_2, \ldots, x_1x_s, x_iz_j : 1 \le i \le s, 2 \le j \le r)}$.  Here we can see that $\mathfrak m_{R} \in \Ass(M)$, as $x_1$ annihilates every variable in $\mathfrak m_{R}$.  Thus, $\depth(R') = 1$, and we are done.
\end{proof}

Immediately, we then get:

\begin{cor}\label{cor:twoPrimesEq} Let $I$ and $n$ be as in Theorem \ref{thm:twoPrimes}.  Then $\ell(I) = n - 1$.
\end{cor}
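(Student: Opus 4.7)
The plan is to observe that Corollary \ref{cor:twoPrimesEq} is really just the synthesis of two results already in hand. Theorem \ref{thm:twoPrimes} gives the upper bound $\ell(I) \le n - 1$, so all that remains is to produce the matching lower bound $\ell(I) \ge n - 1$, which I would extract from Theorem \ref{thm:twoPrimaryIdeals}.

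To do this, I would first check that the setup of Theorem \ref{thm:twoPrimes} fits into the hypotheses of Theorem \ref{thm:twoPrimaryIdeals}. Since $\mathfrak{p}$ and $\mathfrak{q}$ are monomial primes, $\mathfrak{p}^a$ is $\mathfrak{p}$-primary and $\mathfrak{q}^b$ is $\mathfrak{q}$-primary, so $I = \mathfrak{p}^a \cap \mathfrak{q}^b$ is a monomial primary decomposition with exactly two primary components (these primes are distinct since one contains $x_1$ and the other contains $z_1$). The set of variables appearing in the generators of $\mathfrak{p}^a$ is $\{x_1,\ldots,x_s,y_1,\ldots,y_t\}$ and that of $\mathfrak{q}^b$ is $\{y_1,\ldots,y_t,z_1,\ldots,z_r\}$, whose union is the full variable set of $R$. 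Thus the integer $m$ in Theorem \ref{thm:twoPrimaryIdeals} equals $s + t + r = n$.

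Applying Theorem \ref{thm:twoPrimaryIdeals} then yields $n - 1 \le \ell(I) \le n$, and combining the lower half of this with the $\ell(I) \le n - 1$ from Theorem \ref{thm:twoPrimes} pins down $\ell(I) = n - 1$. There is no genuine obstacle here: the technical heavy lifting was already done in establishing the bounds (the convex-geometric halfspace argument for the upper bound in Theorem \ref{thm:twoPrimes}, and the depth computation via Lyubeznik's projective dimension inequality for the lower bound in Theorem \ref{thm:twoPrimaryIdeals}). The only thing worth double-checking is that the identification $m = n$ really uses the hypothesis that every variable appears in at least one of the two primes — which is built into the definitions of $\mathfrak{p}$ and $\mathfrak{q}$ in the statement of Theorem \ref{thm:twoPrimes}.
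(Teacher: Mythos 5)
Your proposal is correct and is exactly the paper's argument: the corollary follows immediately by combining the upper bound $\ell(I)\le n-1$ from Theorem \ref{thm:twoPrimes} with the lower bound $m-1\le\ell(I)$ from Theorem \ref{thm:twoPrimaryIdeals}, noting $m=n$ since every variable of $R$ appears in $\mathfrak p$ or $\mathfrak q$. No differences worth noting.
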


\begin{rem} Note that, using the notation in Corollary \ref{cor:twoPrimesEq}, if $t = 0$, then Theorem \ref{cor:twoPrimesEq} reduces to a simple case of Theorem \ref{thm:disGenPrimary}.  However, if $t > 0$, Theorem \ref{cor:twoPrimesEq} does not hold if the assumption were that $I$ is the intersection of two arbitrary primary ideals.  Indeed, if $R = K[x, y, z]$ and $I = (y^2z^2, y^4, xyz^2, x^3z^3) = (x^3, xy, y^2) \cap (y^4, y^2z^2, z^3)$, then $\ell(I) = 3$.
\end{rem}

\section{Open Questions}
\subsection{Monomial Reductions}  A remarkable fact about minimal reductions of monomial ideals is that they are often not monomial.  Consider the second ideal from Example \ref{ex:starter}, $I_2= (xy, x^4z^4, y^4z^4)$.  As discussed, $\ell(I_2) = 2$.  However, there is no monomial ideal $J$ that satisfies $\mu(J) = 2$.  Indeed, if $J$ did exist, it would be basic.  Thus, by Lemma \ref{lem:npMax}, $\np(J)$ is the unique compact facet of maximal dimension of $\NP(J)$.  But since $\NP(J) = \NP(I_2)$ and $\NP(I_2)$ has more than one maximal compact facet, $J$ is not basic, a contradiction.  (Note: According to Macaulay2, a minimal reduction of $I_2$ is $J = (51y^4z^4 + 2849xy, 34x^4z^4 - 885xy)$).

However, if we consider $I_3 =  (x^2y^2, x^4z^4, y^4z^4, xy^3z^2, x^3yz^2)$, then $J_3 = (x^2y^2, x^4y^4, y^4z^4)$ is a reduction of $I_3$.  Thus, for $I_3$, there \textit{do} exist monomial reductions with a smaller number of minimal generators.  In fact, $J_3$ is its minimal monomial reduction by the same argument above.

This leads to the following questions:
\begin{question}  Given a monomial ideal $I$,
\begin{enumerate}
    \item Does there exist a monomial ideal $J$ that is a minimal reduction of $I$?
    \item If $J$ is a reduction of $I$ that is monomial, how small can $\mu(J)$ be?  Are there multiple monomial reductions that attain this minimum?
\end{enumerate}
\end{question}

From a convex geometric point of view, this is equivalent to asking the minimal number of points so that the Minkowski sum of their convex hull and $\RR_{\ge 0}^n$ is $\NP(I)$.

\end{document}